\def\aa#1{ \begin{align*} #1 \end{align*} }
\def\aaa#1{ \begin{align} #1 \end{align} }
\def\mm#1{ \begin{multline*} #1 \end{multline*} }
\def\mmm#1{ \begin{multline} #1 \end{multline} }
\def\een#1{ \begin{equation}\begin{array}{l} #1 \end{array} \end{equation}}
\newtheorem{thm}{\sc Theorem}[section]
\newtheorem{lem}{\sc Lemma}[section]
\newtheorem{rem}{\sc Remark}[section]
\newcommand{\eps}{\varepsilon}
\newcommand{\pl}{\partial}
\newcommand{\gt}{\geqslant}
\newcommand{\lt}{\leqslant}
\newcommand{\te}{\theta}
\newcommand{\sub}{\subset}
\newcommand{\dl}{\delta}
\newcommand{\al}{\alpha}
\newcommand{\gm}{\gamma}
 \newcommand{\Dl}{\Delta}
 \newcommand{\la}{\lambda}
 \newcommand{\sg}{\sigma}
\newcommand{\om}{\omega}
\newcommand{\mc}{\mathcal}
\newcommand{\Om}{\Omega}
\newcommand{\C}{{\rm C}}
\newcommand{\td}{\tilde}
\newcommand{\E}{\mathbb E}
\newcommand{\we}{\wedge}
\newcommand{\x}{\times}
\newcommand{\mto}{\mapsto}
\newcommand{\PP}{\mathbb P}
\newcommand{\cov}{{\rm cov}\,}
\newcommand{\tr}{{\rm tr}}
\newcommand{\rf}{\eqref}
\newcommand{\bi}{\begin{itemize}}
\newcommand{\ei}{\end{itemize}}
\DeclareMathOperator{\ind}{\mathbbm{1}}
\newcommand{\lap}{\Delta}
\newcommand{\nab}{\nabla}
\newcommand{\lb}{\label}
\newcommand{\fdot}{\,\cdot\,}
\newcommand{\hsp}{\hspace{-2mm}}
\newcommand{\hs}{\hspace{-1mm}}
\newcommand{\sig}{\varsigma}
\newcommand{\tet}{\vartheta}
\def\Rnu{{\mathbb R}}
\def\Nnu{{\mathbb N}}
\def\ffi{\varphi}
\def\com#1{}
\long\def\symbolfootnote[#1]#2{\begingroup%
\def\thefootnote{\fnsymbol{footnote}}\footnote[#1]{#2}\endgroup}
\titleformat{\section}[hang]{\large\bfseries}{\thesection.}{1ex}{}{}
\titleformat{\subsection}[hang]{\normalsize\bfseries}{\thesubsection}{2ex}{}{}
\titleformat{\subsubsection}[hang]{\small\bfseries}{\thesubsubsection}{2ex}{}{}
\title[Multidimensional stochastic Burgers equation via FBSDEs]{Classical solution to a multidimensional stochastic Burgers equation via forward-backward SDEs}
\author{Alberto Ohashi and Evelina Shamarova}
\address{Departamento de Matem\'atica, Universidade Federal da Para\'iba, 13560-970, Jo\~ao Pessoa - Para\'iba, Brazil}
\email{ohashi@mat.ufpb.br} 
\email{evelina@mat.ufpb.br}
\begin{document}

\maketitle

\vspace{-7mm}

\begin{abstract}
In this paper, we address the problem of 
 existence and uniqueness of a global classical
 solution to a multidimensional stochastic Burgers equation 
 without gradient-type assumptions on the force  or the initial condition.
 The equation is first transformed to a random PDE, and then  solved 
via the associated forward-backward SDE. 
 Additionally, we obtain a new a priori gradient estimate valid for a large class of second-order quasilinear parabolic PDEs
 which becomes an important tool in our approach.
Also, we study the stochastic Burgers equation in the vanishing viscosity limit.
\end{abstract}

\vspace{3mm}

{\footnotesize
{\bf Keywords:} Stochastic Burgers equation, Forward-backward SDEs, Gradient estimate, Vanishing viscosity limit.

\vspace{3mm}

{\bf AMS subject classifications:} 60H15, 60H10, 35Q35, 35K59}

\section{Introduction}
In this article, we obtain the existence and uniqueness of a global classical solution to the multidimensional  stochastic Burgers equation
 \aaa{
\label{viscous}
y(t,x) =  h(x) + \int_0^t \hs \big[\nu \Dl y(s,x) - (y,\nabla)y(s,x)  + f(s,x,y) \big] ds + \eta(t,x)
 }
on $[0,T] \x \Rnu^n$, 
where $h$ is a random initial data, $f$ is a deterministic function representing  force, and $\eta(t,x)$ is a noise smooth in $x$
and rough in time. In particular, $\eta(t,x)$ can be a stochastic integral $\int_0^t g(s,x) dB_s$, assumed to be 
defined for each $x$, but this choice does not affect our analysis.
Importantly, we do not assume that any of the functions $f$, $\eta$, or $h$ are of gradient form.

In the past two decades many works have been dedicated to the problem of Burgers turbulence (see, e.g.,
 \cite{assing, bec1, bec2, bertini,  boritchev, Christian, daprato, gotoh, gurbatov1}), that is, the study of solutions to 
 a Burgers equation with a random initial condition or force.  In the extensive survey on Burgers turbulence \cite{bec}, Bec and Khanin 
refer the multidimensional extension of a  stochastic Burgers equation in the non-potential case as an important open question.  
The authors illustrate that
 when the forcing and the initial data are potential (i.e., represented as gradients of other functions), 
 the potential character of the velocity field is conserved by the dynamics, so the situation carry many similarities 
 with the one-dimensional case \cite{bec}. Further, the authors in \cite{bec} explicitly  pose 
  the question  of what happens when the potentiality assumption of the flow is dropped.

  Our main motivation in studying the multidimensional 
  viscous Burgers equation with smooth random forces is its application to the theory of hydrodynamical turbulence
  \cite{bec, boritchev, iturriaga1}.
As such, equations of form  \rf{viscous}  are frequently used 
 as a model of randomly driven Navier-Stokes equations without pressure \cite{14,12}.
 
 In this work, we propose a method of obtaining a global classical solution to  stochastic Burgers equation \rf{viscous} based on a fixed
 point argument of the associated forward-backward SDE (FBSDE) and a gradient estimate.  
 First, we transform \rf{viscous} to a random PDE, and then introduce
 a sequence of stopping times making the noise globally bounded. This
 allows us  to apply FBSDE techniques similar to the case of deterministic PDEs \cite{delarue, peng92}, and also, 
 to make use of our own result on a gradient estimate for PDEs by means of FBSDEs.




The  interest in Burgers turbulence is motivated by its applications in cosmology \cite{zeldovich}, fluid dynamics \cite{15}, superconductors \cite{blatter}, etc.
It is known that the  Burgers equation arises as an asymptotic form of various nonlinear dissipative systems \cite{bec}.
That is why
a one-dimensional stochastic Burgers equation has been intensely studied over the last two decades in a variety of contexts and based on different techniques. The literature is vast, so we refer the reader to the series of works \cite{bertini,daprato,daprato2, gyongy}, and references therein. 
The stochastic multidimensional  potential case, i.e., when the force and the initial data are of the gradient form, has also been studied by some authors
\cite{assing, boritchev, chen, Christian, iturriaga}. Since the potential Burgers equation can be reduced to a one-dimensional parabolic equation 
by a number of known approaches (see, e.g., \cite{boritchev, chen, Christian}), the analysis is significantly simplified.
We remark that in the present article, we consider the non-potential case for both, the random force and the initial condition, 
which does not allow us to apply any of the above techniques. 

  Further, we would like to mention article  \cite{brzezniak}, where
  the authors prove the existence and uniqueness of a global strong solution to a non-potential multidimensional stochastic Burgers equation 
  in the $L_p$-space with the number $p$ bigger than the dimension of the equation.
  Although the stochastic Burgers equation in \cite{brzezniak}  has the form similar to \rf{viscous}, the approach of the aforementioned work
  completely differs from ours. Besides, 
  from the hydrodynamical turbulence point of view, 
 $L_p$-solutions do not appear suitable since they do not convey the meaning of the solution
to \rf{viscous} as the velocity of a fluid at a given point $x$ in the space \cite{landau}. 
Also, our noise term is not assumed to take any specific form, unlike \cite{brzezniak}. In fact, 
the choice of the forcing term $\eta(t,x)$ 
in physics literature is frequently made on the basis of the covariance of the form
$\cov\!(\dot \eta^i(t,x),\dot\eta^j(t',x')) = \dl(t-t') \ffi_{ij}(x-x')$ (see, e.g., \cite{14,12}). However, the above relation is
not satisfied by the stochastic-integral-type noise. 
Remark that in \cite{brzezniak}, the choice of the noise term as a stochastic integral plays a crucial role
 in the analysis.
Another advantage of our method is the use of the associated FBSDE, which may allow the results of paper \cite{delarue3} 
 on a forward-backward stochastic algorithm for PDEs
to be applied to tackle equation \rf{viscous}  numerically. 

 Furthermore, we mention that in the deterministic case, the global existence and uniqueness of a classical solution to 
the multidimensional Burgers equation
   is known due to the results of Ladyzhenskaya 
  et al \cite{lady}, and follows as a particular case of a more general theory for systems of quasilinear parabolic PDEs. 
  However, the results of \cite{lady} are not applicable to equation \rf{viscous} since the noise is not differentiable in time.

  As a byproduct of our approach, we obtain
 an a priori gradient estimate valid for a large class of quasilinear second order parabolic PDEs.
Our bound is obtained exclusively  by using the associated FBSDE. 
  Previously, a gradient estimate by means of FBSDE techniques was obtained in \cite{delarue2}. 
  However, the result of \cite{delarue2} cannot be applied to the present case. Indeed, in our work,  the gradient
estimate is used in the process of construction of the solution by glueing the solutions on short-time intervals, i.e., 
we deal with solutions defined on subintervals of $[0,T]$ but not on the entire interval. In this situation,
 the results of \cite{delarue2} do not guarantee that the gradient bound will be uniform 
 over the length of the subinterval, while our result does guarantee that. Thus, our gradient estimate
appears completely suitable for solving some class of PDEs by means of FBSDEs.
Additionally, our approach to obtaining this bound is significantly simpler and shorter than in \cite{delarue2},
although it is valid for a smaller class of PDEs.

  Also, we remark that the classical book on quasilinear parabolic PDEs
  by Ladyzhenskaya et al \cite{lady} only provides an a priori gradient estimate for an initial-boundary value problem
  on a bounded domain.

 Finally, we study  the vanishing viscosity limit of equation \rf{viscous}.
 We investigate this problem only locally. 
 Namely, we prove that on a small random time interval,
 there exists a unique classical solution to the inviscid stochastic Burgers equation and the solutions to viscous stochastic
 Burgers equations with the same force terms and the initial data converge to the inviscid solution uniformly in space and time. 
 Note that even on a short time interval, many authors investigated 
the vanishing viscosity limit in hydrodynamics problems.
 As such, Ebin and Marsden \cite{ebin} proved the convergence of local Sobolev-space-valued solutions of the Navier-Stokes equation
 to local solutions of the Euler equation. Golovkin \cite{golovkin} and Ladyzhenskaya \cite{lady2} obtained 
 the aforementioned convergence uniformly in space and time. Further, Ton \cite{ton} studied the local vanishing viscosity limit of a  multidimensional deterministic Burgers equation in an $L_2$-space. Furthermore, Brze\'zniak et al \cite{brzezniak} proved that
  viscous solutions to a potential stochastic Burgers equation
 converge locally to an inviscid viscosity solution.
It is known that even if the initial data and the force are smooth, a one-dimensional inviscid Burgers equation develops discontinuities (shocks)
 at a finite time, and, therefore, fails to have a global classical solution.
 Thus, one cannot expect a global uniform approximation of inviscid solutions by viscous. Finally, we remark that  the inviscid 
multidimensional  stochastic Burgers equations is also studied by means of  the associated stochastic forward-backward system.


\section{Existence and uniqueness of solution to equation \rf{viscous}}
\lb{global1}

In this section, we show that under assumptions (A1)--(A3) below, equation \rf{viscous} possesses a unique global  
solution $y(t,x)$ which is $\C^2$-smooth in $x$ and continuous in $t$.

\subsection{Assumptions and choice of the noise}
Let $(\Om,\mc F, \mc F_t, \PP)$ be a filtered probability space satisfying the usual conditions.

Assume the following:
\bi
\item[\bf (A1)] $f(t,x,y)$ is an $\Rnu^n$-valued deterministic function of class $\C^{0,2}_b([0,T]\x \Rnu^{2n})$.
 \item[\bf (A2)] $\eta(t,x)$ is an $\Rnu^n$-valued stochastic process which is 
 $\mc F_t$-adapted for each $x$;
 moreover, a.s., $\eta(t,x)$ is of class $\C^{0,4}_b([0,T] \x \Rnu^n)$ and 
 $\eta(0,x) = 0$.
  \item[\bf (A3)] For  each $x\in\Rnu^n$, $h(x)$ is an $\mc F_0$-measurable random variable, which, moreover, is 
 of class $\C^2_b(\Rnu^n)$ a.s. 
 \ei

Below, we give a few examples of the noise process $\eta(t,x)$ satisfying (A2). 

\textit{Example 1.} $\eta(t,x) = \int_0^t g(s,x) dB_s = \sum_{i=1}^d g_i(s,x)dB^i_s$, where 
$B^i_t$ are independent real-valued $\mc F_t$-Brownian motions, and the stochastic integral is defined for each $x\in\Rnu^n$.
Let us show that $\eta(t,x)$ verifies (A2) for some integrands $g(t,x)$. Namely, we assume:
\bi
\item[(i)] For each $x\in\Rnu^n$,  $g(t,x)$ is a progressively measurable stochastic process with values in
  $\Rnu^{d\x n}$ which takes the form $g(t,x) = \td g(t,\phi(x))$ for some $\Rnu^l$-valued
  random function $\phi(x)$ such that for each $x$ it is a
  random variable independent of $B_t$, $t\in [0,T]$.
 \item[(ii)]
  For each $t\in [0,T]$, $\td g(t,\fdot)$ is of class 
  $\C^{4+\al}_b(\Rnu^l)$ a.s., $\al \in (0,1)$;
   $\phi$ is of class $\C^{4}_b(\Rnu^n)$ a.s., and, furthermore, 
   $\E\int_0^T \|\td g(t,\fdot)\|^p_{\C^{4+\al}_b(\Rnu^l)} dt < \infty$
  for some  $p> 2+\te + (4+\te^2)^\frac12$,
 where $\te = \frac12\, \al^{-1}(n+1)$.
\ei
\begin{rem}
\lb{rem-b}
\rm
Recall that the space $C^{k+\al}_b(\Rnu^m)$,  $\al\in (0,1)$, $k\in\Nnu$,
is defined as the (Banach) space of functions $\zeta(x)$ possessing the finite norm
\aa{
\|\zeta\|_{C^{k+\al}_b(\Rnu^m)} = \|\zeta\|_{C^{k}_b(\Rnu^m)}
+ [\nab^k_x\zeta]^x_{\al},
}
where the H\"older constant $[\tet]^x_{\al}$ is defined as 
\aa{
[\tet]^x_{\al} = \sup_{\substack{x,x'\in \Rnu^m, \\ 0<|x-x'|<1}} \frac{|\tet(x) - \tet(x')|}{|x-x'|^{\al}}.
}
\end{rem}

\begin{rem}
\lb{rem2.2}
\rm
Assumptions (i) and (ii) are satisfied, in particular, 
when the functions $g(t,\fdot)$
 have a common compact support $D\sub\Rnu^n$. Then, take $\td g(t,x) = g(t,x)$ and $\phi(x) = x\xi(x)$, where $\xi(x)$ is
a $\C^\infty$-cutting function for $D$, i.e., $\xi(x) = 1$ if $x\in D$,
$\xi(x) = 0$ if $x$ is outside of $D_\dl$, a small $\dl$-neighborhood of $D$, and, moreover, $0\lt \xi(x)\lt 1$.
Furthermore, assume that $g(t,x)$ satisfies the regularity and integrability assumptions from (i) and (ii).
\end{rem}
\begin{lem}
\lb{lem99}
Under assumptions (i) and (ii), there is a version of 
the stochastic integral $\int_0^t g(s,x) dB_s$ which belongs to the space $\C^{0,4}_b([0,T] \x \Rnu^n)$.
\end{lem}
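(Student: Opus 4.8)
The plan is to build $\eta$ as a single Hilbert-space-valued It\^o integral and then to \emph{decouple} the two regularities it must possess: the spatial smoothness (class $\C^4_b$ in $x$) will be supplied, path by path, by the Sobolev embedding of Assumption (A2), whereas the temporal regularity (the $\frac\beta2$-H\"older seminorm in $t$) will come from a one-parameter Kolmogorov continuity theorem whose H\"older exponent is tuned exactly by the integrability index $p>\frac{4}{1-\beta}$.

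First I would regard $g(s,\fdot)$ as an $\mc F^B_s$-adapted process with values in the Hilbert space $H^k(\Rnu^{d\x n})$ and define the $H^k(\Rnu^n)$-valued It\^o integral $\eta(t,\fdot)=\int_0^t g(s,\fdot)\,dB_s$, where the integrand is the Hilbert--Schmidt-valued process whose Hilbert--Schmidt norm equals $\|g(s,\fdot)\|_{H^k(\Rnu^{d\x n})}$. By (A2), H\"older's inequality on $[0,T]$ and $p>2$ give $\E\int_0^T\|g(s,\fdot)\|^2_{H^k(\Rnu^{d\x n})}\,ds<\infty$, so this is a well-defined $H^k(\Rnu^n)$-valued square-integrable martingale. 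For fixed $x$ and $|\gamma|\lt 4$, the functionals $v\mto v(x)$ and $v\mto(\partial^\gamma v)(x)$ are continuous linear functionals on $H^k(\Rnu^n)$ by the embedding $H^k\hookrightarrow\C^4_b$; since bounded linear functionals commute with the stochastic integral, $\eta(t,x)=\int_0^t g(s,x)\,dB_s$ a.s., so $\eta$ is a version of the stochastic integral, and likewise $\partial^\gamma_x\eta(t,x)=\int_0^t\partial^\gamma_x g(s,x)\,dB_s$ a.s.

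Next I would estimate time increments in the $H^k(\Rnu^n)$-norm. Applying the Burkholder--Davis--Gundy inequality for Hilbert-space-valued martingales and then H\"older's inequality in $s$,
\aa{
\E\|\eta(t,\fdot)-\eta(t',\fdot)\|^p_{H^k(\Rnu^n)} &\lt C_p\,\E\Big(\int_{t'}^t\|g(s,\fdot)\|^2_{H^k(\Rnu^{d\x n})}\,ds\Big)^{p/2} \\
&\lt C_p\,|t-t'|^{\frac p2-1}\,\E\int_0^T\|g(s,\fdot)\|^p_{H^k(\Rnu^{d\x n})}\,ds,
}
and the last expectation is finite by (A2). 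Since $p>\frac{4}{1-\beta}>4$ we have $\frac p2-1>1$, so Kolmogorov's continuity theorem for processes valued in the Banach space $H^k(\Rnu^n)$ produces a version of $t\mto\eta(t,\fdot)$ that is $\al$-H\"older in the $H^k(\Rnu^n)$-norm for every $\al<\frac12-\frac2p$; and $p>\frac4{1-\beta}$ is exactly the condition $\frac12-\frac2p>\frac\beta2$, so this path is in particular $\frac\beta2$-H\"older in $t$.

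Finally, composing with the bounded inclusion $H^k(\Rnu^n)\hookrightarrow\C^4_b(\Rnu^n)$ transports everything to the $\C^4_b$-norm: $t\mto\eta(t,\fdot)$ is a $\frac\beta2$-H\"older $\C^4_b(\Rnu^n)$-valued path, hence $\sup_t\|\eta(t,\fdot)\|_{\C^4_b}<\infty$ a.s., each $\partial^\gamma_x\eta$ with $|\gamma|\lt 4$ is jointly continuous and bounded on $[0,T]\x\Rnu^n$ (giving $\|\eta\|_{\C^{0,4}_b}<\infty$), and $\sup_x|\eta(t,x)-\eta(t',x)|\lt\|\eta(t,\fdot)-\eta(t',\fdot)\|_{\C^4_b}\lt C|t-t'|^{\frac\beta2}$ bounds the seminorm $\sup_x[\eta]^t_{\frac\beta2}$; together these place $\eta$ in $\C^{\frac\beta2,4}_b([0,T]\x\Rnu^n)$. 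The main obstacle I expect is the rigorous handling of the Hilbert-space-valued integral, specifically justifying that point evaluation and spatial differentiation commute with the stochastic integral so that one $H^k$-valued object simultaneously serves as a version of all the scalar integrals $\int_0^t\partial^\gamma_x g(s,x)\,dB_s$; once this identification is secured, the two regularities separate cleanly, the Sobolev embedding handling the space variable and the one-parameter Kolmogorov theorem the time variable.
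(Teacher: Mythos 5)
Your proposal is correct and follows essentially the same route as the paper: an $H^k(\Rnu^n)$-valued It\^o integral, the Burkholder--Davis--Gundy bound $\E\|\eta(t,\fdot)-\eta(t',\fdot)\|^p_{H^k}\lt C_p|t-t'|^{\frac p2-1}\E\int_0^T\|g\|^p_{H^k}\,ds$, Kolmogorov's continuity theorem in the Hilbert space to get the $\tfrac\beta2$-H\"older version (with exactly the exponent bookkeeping $p>\tfrac4{1-\beta}$ that the paper's assumption encodes), and the composition with the Sobolev embedding and point evaluation to identify $\eta(t,x)$ with $\int_0^t g(s,x)\,dB_s$ and obtain the $\C^4_b$ spatial regularity. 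Your explicit attention to commuting point evaluation and differentiation with the stochastic integral is the same device the paper uses via the continuous operator $\pi_x\circ j$.
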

For the proof of Lemma \ref{lem99}, we need the next lemma.
\begin{lem}
\lb{h99}
Assume that for each $x\in\Rnu^n$, $\zeta(t,x)$  is a progressively measurable 
$\Rnu^{d\x n}$-valued stochastic process such that
for each $t\in [0,T]$, $\zeta(t,x)$
 belongs to class $\C^{1+\al}(\Rnu^n)$ and  $\E\int_0^T\|\zeta(s,\fdot)\|^p_{\C^{1+\al}(\Rnu^n)} ds<\infty$
for a number $p$ as in (ii). Then, 
the stochastic integral $\int_0^t \zeta(s,x) dB_s$ possesses a $\C^{0,1}([0,T]\x\Rnu^n)$-modification.
\end{lem}
\begin{proof}
Let, for any function $\tet(x)$, $\lap^k_\eps \tet(x) = \eps^{-1}\big(\tet(x+\eps e_k) - \tet(x)\big)$.
It is immediate to verify that
\mm{
\E\Big|\lap^k_\eps \int_0^t \zeta(s,x) dB_s - \lap^k_{\eps'} \int_0^{t'} 
\zeta(s,x') dB_s\Big|^p\\
\lt \gm(p,T) \, \E\int_0^T\|\zeta(s,\fdot)\|^p_{\C^{1+\al}(\Rnu^n)} ds\, \big(|\eps - \eps'|^{\al p} + |x-x'|^{\al p}
+|t-t'|^{\frac{p}2 -1}\big)
}
for some constant $\gm(p,T)$.
The statement of the lemma holds by the choice of $p$ (as in (ii)) and Kolmogorov's continuity theorem.
\end{proof}

\begin{proof}[Proof of Lemma \ref{lem99}]
Lemma \ref{h99} implies that the stochastic integral $\int_0^t \td g(s,z) ds$
possesses a $\C^{0,4}$-modification. This immediately implies that $\int_0^t \td g(s,\phi(x)) ds$ possesses
a $\C^{0,4}_b$-modification, i.e., its derivatives in $x$ are bounded.
\end{proof}

\textit{Example 2.}  Assume $g(t,\fdot)$ takes values in $\mc L(H, H^k(\Rnu^n))$, where 
$H$ is a Hilbert space and $H^k(\Rnu^n)$ is a Sobolev space with sufficiently large $k$. Further, let
$B_t$ be an $H$-valued cylindrical Brownian motion. Then,  
$\eta(t,\fdot) = \int_0^t g(s,\fdot) dB_s$ can be understood as an $H^k(\Rnu^n)$-valued stochastic integral. 
This implies that $\eta(t,x)$ is in $\C^{0,4}_b([0,T],\Rnu^n)$
by Kolmogorov's continuity theorem and Sobolev's imbedding $H^k(\Rnu^n) \hookrightarrow \C^4_b(\Rnu^n)$.

\textit{Example 3.}  Let $\dot W^i(t,x)$, $i=1,\ldots, n$, be independent space-time white noises, and let
$\dot W^i_\eps(t,x)$ be a regularization in $x$ of $\dot W^i(t,x)$, that is,
$\dot W^i_\eps(t,x) = (\dot W^i(t,\fdot) \ast \rho_{\eps})(x)$, where 
$\rho_{\eps}$ is a standard mollifier supported on the ball of radius $\eps$.
Alternatively, one can write 
$W^i_\eps(t,x) = (W^i(t,\fdot) \ast \pl^{n}_{x_1\ldots x_n}\rho_{\eps})(x)$, where $W^i(t,x)$  
is an $(n+1)$-parameter Brownian sheet. 
The filtration $\mc F_t$ can be taken as
follows $\sg\{W^i(s,x), 0\lt s\lt t,  i=1, \ldots, n, x\in\Rnu^n\}\vee \sg\{h(x), x\in\Rnu^n\} \vee \mc N$, where $\mc N$ is the collection of $\PP$-null sets. 
Remark that  $\cov(\dot W^i_\eps(t,x), \dot W^j_\eps(t',x'))=
\dl(t-t') \ffi_{ij}(x-x')$, where $\ffi_{ij}(y) = \dl_{ij} \int_{\Rnu^n} \rho_{\eps}(z)\rho_{\eps}(z+y) dz$.
Since we are interested in noises of class $\C^{0,4}_b(\Rnu^n)$, define $\dot \eta^i(t,x)$ as $\dot W^i_\eps(t,x) \xi(x)$,
where $\xi(x)$, $x\in\Rnu^n$, is a $\C^\infty$-cutting function for a bounded domain $D\sub\Rnu^n$ (see Remark \ref{rem2.2}).

\begin{rem}
\lb{om0}
\rm
Everywhere below, the set full $\PP$-measure, where  $\eta(t,x)$ and $h(x)$
belong to classes $\C^{0,4}_b([0,T] \x \Rnu^n)$ and $\C^2_b(\Rnu^n)$, respectively,
and $\eta(0,x)= 0$, will be denoted by $\Om_0$.
\end{rem}

\subsection{Local existence for stochastic Burgers-type equations}
\lb{local}
We start with the following lemma whose proof is straightforward.
\begin{lem}
\lb{lem14}
The substitution
\aaa{
\lb{substitution}
\hat y(t,x) = y(t, x) - \eta(t,x)
}
transforms \rf{viscous} to the following Burgers-type equation with random coefficients:
\aaa{
\lb{rforced}
\begin{cases}
\pl_t \hat y(t,x) =  \nu \Delta \hat y(t,z) - (\eta(t,x) + \hat y,\pl_x)\hat y(t,x) + F(t,x,\hat y),\\
\hat y(0,x) = h(x),
\end{cases}
}
where 
\aaa{
\lb{rforce}
 F(t,x,\hat y) = f(t,x,\hat y +  \eta(t,x) ) + \nu\lap \eta (t,x) - (\hat y + \eta,\pl_x )\eta (t,x).
}
\end{lem}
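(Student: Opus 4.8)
The plan is to establish the equivalence by a direct substitution in the integral form of \rf{viscous}, with $\eta$ playing the role of the martingale part that the shift $\hat y = y-\eta$ absorbs. First I would invoke Lemma \ref{lem99} together with Remark \ref{om0}: for $\om\in\Om_0$ the process $\eta(t,x)=\int_0^t g(s,x)\,dB_s$ lies in $\C^{\frac{\beta}2,4}_b([0,T]\x\Rnu^n)$, so that $\nu\lap\eta$ and $(\fdot,\nab)\eta$ are classically defined and $\eta(0,x)=0$. Recognizing the stochastic integral in \rf{viscous} as $\eta(t,x)$ and subtracting it from both sides, the martingale term cancels and I obtain
\aa{
\hat y(t,x)=h(x)+\int_0^t\big[\nu\Dl y(s,x)-(y,\nab)y(s,x)+f(s,x,y)\big]\,ds,
}
together with $\hat y(0,x)=y(0,x)-\eta(0,x)=h(x)$.

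The next step is purely algebraic. Substituting $y=\hat y+\eta$ into the integrand and using linearity of $\lap$ and the bilinear expansion
\aa{
(y,\nab)y=(\hat y+\eta,\nab)\hat y+(\hat y+\eta,\nab)\eta,
}
I would collect the terms so that the ones not differentiating $\hat y$ assemble into $F$; that is, $\nu\lap\eta$, $-(\hat y+\eta,\nab)\eta$, and $f(t,x,\hat y+\eta)$ combine to give exactly $F(t,x,\hat y)$ as in \rf{rforce}, while the remaining principal part is $\nu\lap\hat y-(\eta+\hat y,\nab)\hat y$. This produces the integral identity
\aa{
\hat y(t,x)=h(x)+\int_0^t\big[\nu\Dl\hat y(s,x)-(\eta+\hat y,\nab)\hat y(s,x)+F(s,x,\hat y)\big]\,ds.
}

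To reach the differential form \rf{rforced}, I would then note that for $\om\in\Om_0$ the integrand above is continuous in $s$: this follows from the continuity in time of $\eta$ and its spatial derivatives up to second order (Lemma \ref{lem99}), the continuity of $f$ from (A1), and the time-continuity of $\hat y$ and its spatial derivatives. Hence $t\mapsto\hat y(t,x)$ is $\C^1$ in $t$, and differentiating the integral identity yields \rf{rforced} with initial datum $h$.

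The point needing the most care is not the bookkeeping but the justification that the affine shift removes the It\^o term without leaving a correction. Because $\eta$ carries the entire stochastic integral and the substitution is affine in the unknown, $\hat y$ has vanishing martingale part and is therefore of locally bounded variation in $t$; this is precisely what legitimizes treating \rf{rforced} as a pathwise, $\mc F^B_t$-adapted (random) PDE for each fixed $\om\in\Om_0$, rather than an equation in the It\^o sense. Once this is observed, the passage from the integral identity to \rf{rforced} is the elementary differentiation described above.
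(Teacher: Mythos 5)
Your proposal is correct and is exactly the direct substitution the paper has in mind -- its own proof is simply the one-line remark that the computation is straightforward, and your expansion of the nonlinear term, collection of the $\eta$-terms into $F$, and differentiation of the resulting integral identity on $\Om_0$ is precisely that computation carried out in full. No discrepancy to report.
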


Everywhere below throughout this subsection, we assume that 
$\eta$, $F$, and $h$ possess deterministic bounds in the spaces $C^{0,2}_b([0,T]\x \Rnu^n)$, $C^{0,2}_b([0,T]\x \Rnu^{2n})$, and $\C^2_b(\Rnu^n)$, respectively.
Moreover, 
the force term $F$ is not assumed to necessarily take form \rf{rforce}.


In Theorem \ref{adapted} below, we prove 
the existence and uniqueness of a local $\mc F_t$-adapted $\C^{1,2}_b$-solution to \rf{rforced}. 
First, by doing the time change $\bar y(t,x) = \hat y(T-t,x)$, we transform \rf{rforced} to the backward equation
\aaa{
\lb{backward-pde}
\bar y(t,x) = h(x) + \int_t^T \big[ \nu \lap \bar y(s,x) - (\bar\eta(t,x) + \bar y, \nab)\bar y(s,x) + \bar F(s,x,y) \big] ds
}
with $\bar F(t,x,y) = F(T-t,x,y)$ and $\bar \eta(t,x) = \eta(T-t,x)$.

The following lemma will be useful.
\begin{lem}
\lb{lem2}
Let $W_t$ be a one-dimensional Brownian motion and $\mc B$  be a $\sg$-algebra independent of the (augmented) natural filtration $\mc F^W_t$ 
of $W_t$.
Assume that $\Phi_t$ is $\mc F^W_{t} \vee \mc B$-adapted and 
$\E\int_0^t |\Phi_s|^2 ds < \infty$, $t>0$. Then, 
$\E\Big[\int_{0}^{t} \Phi_s dW_s|\mc B\Big] = 0$ a.s.
\end{lem}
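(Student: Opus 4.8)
The plan is to enlarge the filtration and reduce the claim to the martingale property of the It\^o integral, together with the observation that the conditioning $\sg$-algebra $\mc B$ sits inside the enlarged filtration at time $0$. Set $\mc G_t = \mc F^W_t \vee \mc B$. Since $\Phi_t$ is $\mc G_t$-adapted and square integrable, the process $M_t = \int_0^t \Phi_s\, dW_s$ is a well-defined It\^o integral relative to $(\mc G_t)$, \emph{provided} $W$ is a Brownian motion (hence a martingale) for this filtration. Granting this, $M_t$ is a $\mc G_t$-martingale with $M_0 = 0$, so $\E[M_t \mid \mc G_0] = M_0 = 0$; because the augmented $\mc F^W_0$ consists of null sets only, $\mc G_0$ and $\mc B$ differ only by null sets, whence $\E[M_t \mid \mc B] = \E[M_t \mid \mc G_0] = 0$, which is the assertion.

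The key step, and the only point requiring care, is to check that $W$ remains a Brownian motion with respect to $(\mc G_t)$, i.e.\ that for $s<t$ the increment $W_t - W_s$ is independent of $\mc G_s = \mc F^W_s \vee \mc B$ with the correct Gaussian law. I would verify this on the $\pi$-system of sets $C_1 \cap C_2$ with $C_1 \in \mc F^W_s$ and $C_2 \in \mc B$: using that $\mc B$ is independent of the whole $\sg$-algebra $\sg(\mc F^W_s, W_t - W_s) \sub \mc F^W_\infty$, and that $W_t - W_s$ is independent of $\mc F^W_s$, one factorizes
\[
\PP(W_t - W_s \in A,\, C_1 \cap C_2) = \PP(W_t - W_s \in A)\,\PP(C_1)\,\PP(C_2) = \PP(W_t - W_s \in A)\,\PP(C_1 \cap C_2).
\]
This is where the hypothesis that $\mc B$ is independent of the \emph{entire} natural filtration of $W$ is essential: independence from each $\mc F^W_t$ separately would not control the joint law of the increment and the past, and this enlargement argument is the main (indeed the only) obstacle.

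To finish without invoking heavy machinery, I would prove the identity first for simple integrands $\Phi_s = \sum_j \xi_{t_j}\ind_{(t_j, t_{j+1}]}(s)$, where $\xi_{t_j}$ is $\mc G_{t_j}$-measurable and bounded. Since $\mc B \sub \mc G_0 \sub \mc G_{t_j}$, the tower property together with the independence just established gives
\[
\E\Big[\int_0^t \Phi_s\, dW_s \,\Big|\, \mc B\Big] = \sum_j \E\big[\,\xi_{t_j}\,\E[W_{t_{j+1}} - W_{t_j}\mid \mc G_{t_j}]\,\big|\, \mc B\big] = 0.
\]
The general case follows by density: choosing simple $\Phi^{(n)} \to \Phi$ in $L^2([0,t]\x\Om)$, the It\^o isometry yields $\int_0^t \Phi^{(n)}_s\, dW_s \to \int_0^t \Phi_s\, dW_s$ in $L^2(\Om)$, and since conditional expectation is an $L^2$-contraction, $\E[\int_0^t \Phi_s\, dW_s \mid \mc B] = \lim_n \E[\int_0^t \Phi^{(n)}_s\, dW_s \mid \mc B] = 0$ in $L^2(\Om)$, hence a.s.
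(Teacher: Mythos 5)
Your proof is correct and follows essentially the same route as the paper's: verify the identity for simple $\mc F^W_t\vee\mc B$-adapted integrands using $\E\big[W_{t_{j+1}}-W_{t_j}\mid \mc F^W_{t_j}\vee\mc B\big]=0$, then pass to general $\Phi$ by $L^2$-density, It\^o's isometry, and the contraction property of conditional expectation. The only difference is that you spell out, via the $\pi$-system argument, why the increment $W_t-W_s$ is independent of $\mc F^W_s\vee\mc B$ (equivalently, why $W$ remains a Brownian motion for the enlarged filtration), a step the paper uses implicitly; this is a worthwhile addition, not a deviation.
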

\begin{proof}
Let $0 = s_1 < \ldots < s_n = t$ be a partition.
Note that for a simple
$\mc F^W_{t} \vee \mc B$-adapted integrand $\Phi =\sum_i \Phi_{i}\ind_{[s_i,s_{i+1})}$, it holds that
\mm{
\E\Big[ \int_{0}^{t} \Phi_s dW_s|\, \mc B \Big]= \E\Big[\sum_i \Phi_{i}(W_{s_{i+1}}- W_{s_i})|\, \mc B\Big]\\ = \sum_i \E
\Big[ \Phi_{i} \, \E\big[(W_{s_{i+1} }- W_{s_i})|\mc F^{W}_{s_i}\vee \mc B\big] |\, \mc B\Big]= 0.
}
Further, we note that if a sequence $\{\Phi^{(n)}_t\}$ of
simple $\mc F^W_{t} \vee \mc B$-adapted integrands is such that
$\E \int_{0}^{t} (\Phi^{(n)}_s-\Phi_s)^2 ds  \to 0$,
then by the conditional Jensen's inequality
and It\^o's isometry,
$\E\big(\E\big[\int_{0}^{t} \big(\Phi^{(n)}_s -\Phi_s\big)dW_s|\, \mc B\big]\big)^2 \to 0$.
\end{proof}

Everywhere below, 
the symbol $\E_\tau$ will denote the conditional expectation with respect to $\mc F_{T-\tau}$.
\begin{thm}
\lb{adapted}
Let, the functions $\bar\eta(t,x)$, $\bar F(t,x,y)$, $h(x)$ satisfy the assumptions:
\bi
\item[1)] $\bar F(t,x,y)$ and $\bar\eta(t,x)$ are  
$\mc F_{T-t}$-adapted for each $x,y\in\Rnu^n$.
\item[2)] 
 $\bar \eta(t,x)$ and $h(x)$ a.s. belong to spaces
 $\C^{0,2}_b([0,T] \x\Rnu^n)$ and $\C^2_b(\Rnu^n)$, respectively,
 and possess a
deterministic bound $K$ with respect to the norms of the spaces.  
\item[3)] 
 $\bar F(t,x,y)$ is of class $\C^{0,2}([0,T] \x \Rnu^{2n})$ and satisfies the estimate
 $|\bar F(t,x,y)|+ |\nab_{(x,y)}\bar F(t,x,y)| + |\nab^2_{(x,y)}\bar F(t,x,y)| \lt K(1+|y|)$ a.s.
 \ei
Then,  there exists a constant $\gm_K$, depending only on $K$, such that
on $[T-\gm_K,T]$, there exists an $\mc F_{T-t}$-adapted $\C^{1,2}_b$-solution $\bar y(t,x)$ to equation  \rf{backward-pde}.
\end{thm}
\begin{proof}
In what follows, $\gm_i$, $\mu_i$, $i=1,2, \ldots$, are  positive deterministic constants that
may depend only on $p$ and $K$; in particular, they do not depend on $\nu$.
We will track the dependence of some constants on $\nu$ because it is important
for the next section.
Furthermore, the constants $\td \gm_{K}$, $\dot\gm_K$, $\hat \gm_{K}$, $\bar \gm_{K}$,  $\gm_{K}$  are 
positive and deterministic,
 that depend only on $K$; they determine the length of the interval.  Without loss of generality, these $\gm_K$-type constants are
 assumed to be smaller than $1$.

We prove the existence of an $\mc F_{T-t}$-adapted $C^{1,2}_b$-solution to \rf{backward-pde}
by means of the associated FBSDEs
(see \cite{delarue}, \cite{peng92}):
\aaa{
\lb{fbsde-new}
\begin{cases}
 X^{\tau,x}_t = x - \int_\tau^t \big( \bar\eta(s,X^{\tau,x}_s) + Y^{\tau,x}_s)\big) ds + \sqrt{2\nu} (W_t-W_\tau)\\
Y^{\tau,x}_t = h ( X^{\tau,x}_T) + \int_t^T  \bar F(s, X^{\tau,x}_s, Y^{\tau,x}_s) \, ds  - \int_t^T Z^{\tau,x}_s dW_s,
\end{cases}
}
where $W_t$ is an $n$-dimensional Brownian motion independent of the filtration $\mc F_{T-t}$,
and  the upper index $\tau,x$ means that the process $X^{\tau,x}_t$ starts at $x$ at time $\tau>0$.
For each $\tau\in (0,T)$, define the filtration
 \aaa{
 \lb{filtt}
 (\mc G^{\tau}_t)_{\tau\lt t \lt T}  =  \sg\{W_s-W_\tau, s \in [\tau,t]\}  \vee \mc F_{T-\tau}.
 }
In what follows, when it does not lead to misunderstanding, we will often skip the upper index ${\tau,x}$ 
in $(X^{\tau,x}_t,  Y^{\tau,x}_t,  Z^{\tau,x}_t)$ and similar processes to simplify notation.

\textit{Step1. 
Boundedness of $\E_\tau\, |Y^{\tau,x}_t|^p$ and modified FBSDE.}
Consider the backward SDE in \rf{fbsde-new}. 
From the assumptions of the theorem and It\^o's formula, it follows that $\E_\tau\, |Y^{\tau,x}_t|^p$ is bounded, a.s., 
for any 
solution $Y^{\tau,x}_t$ to this BSDE and for any  $\mc G^{\tau}_t$-adapted process $X^{\tau,x}_t$.
Indeed, since
\aa{
\bigl(|g|^p\bigr)'h  = p|g|^{p-2}(g,h); \;
\bigl(|g|^p\bigr)''h_1 h_2 = p(p-2)|g|^{p-4}(g,h_1)(g,h_2)+ p|g|^{p-2}(h_1,h_2)
}
for $p\gt 2$, then, a.s.,
\mmm{
\lb{ito-p}
\E_\tau\,|Y_t|^p
+ p(p-2)\int_t^T \E_\tau\, \bigl[|Y_s|^{p-4}\sum_{i=1}^n|(Z^i_s,Y_s)|^2\bigr]\,ds \\
+ p\int_t^T \hspace{-1mm} \E_\tau\,\bigl[|Y_s|^{p-2}|Z_s|^2\bigr]\,ds  = \E_\tau\,|h(X_T)|^p
+ 2p\int_t^T \hspace{-1mm}\E_\tau\,\bigl[|Y_s|^{p-2}(\bar F(s, X_s, Y_s), Y_s)\bigr]\, ds.
}
Since $|\bar F(t,x,y)| \lt K(1+|y|)$, then 
Young's inequality and Gronwall's lemma imply that  for every $(\tau,x)$,
\aaa{
\lb{y-bound}
\E_\tau\, |Y_t|^p \lt \gm_1 
\quad \text{and} \quad |Y^{\tau,x}_\tau| \lt (\gm_1)^\frac1{p} \quad \text{a.s.}
}
Moreover, $\gm_1$ is the same for all $(\tau,x) \in [0,T]\x\Rnu^n$.

Now let $\dl = (\gm_1)^\frac1{p}$ for some fixed $p$, and let
$\zeta_\dl(y) = \xi_\dl(y) y$, where $\xi_\dl(y)$ is a $\C^{\infty}$-cutting function for the ball $B_\dl$ of radius $\dl$ centered at the origin (see 
Remark \ref{rem2.2}).
We modify $\bar F$
 by introducing $\zeta_\dl(y)$ instead of $y$ as follows: 
\aaa{
\lb{gmforce}
\bar F_\dl(t,x,y)  = \bar F(t,x,\zeta_\dl(y)).
}
Together with Assumption 3), this implies that $|\bar F_\dl|$ is uniformly bounded by $K(1+\dl)$.
Further, consider the modified FBSDE
\aaa{
\lb{fbsde-modified}
\begin{cases}
 X^{\tau,x}_t = x - \int_\tau^t \big( \bar\eta(s,X^{\tau,x}_t) + Y^{\tau,x}_s)\big) ds + \sqrt{2\nu} (W_t-W_\tau)\\
Y^{\tau,x}_t = h ( X^{\tau,x}_T) + \int_t^T  \bar F_\dl(s, X^{\tau,x}_s, Y^{\tau,x}_s) \, ds  - \int_t^T Z^{\tau,x}_s dW_s.
\end{cases}
}
According to the results of \cite{delarue} (Theorem A.1), there exists a constant $\td \gm_K$,
depending only on $K$ (remark that $\dl$ also depends only on $K$), such that  
 whenever $T-\tau \lt \td\gm_{K}$, 
 system \rf{fbsde-modified} possesses a unique
  $\mc G^\tau_t$-adapted solution $(X^{\tau,x}_t, Y^{\tau,x}_t,Z^{\tau,x}_t)$ on $[\tau,T]$ such that $X^{\tau,x}_t$ and $Y^{\tau,x}_t$ have continuous paths a.s.

 {\it Step 2. Continuity of the map $(\tau,x)\mto Y^{\tau,x}_\tau$ and solution to the original FBSDE.}
    First, we prove that
the map $[T-\dot\gm_{K},T]\x \Rnu^n \to \C([T-\dot\gm_{K},T])$,   $(\tau,x) \mto (X^{\tau,x}, Y^{\tau,x})$
has  an a.s. continuous version for some constant $0<\dot\gm_K<\td\gm_K$.
 This continuity will  be
required, in particular, for the proof of differentiability of
$(X^{\tau,x}_t,Y^{\tau,x}_t)$ with respect to $x$.
Extend $X^{\tau,x}_s$ to $[T-\td\gm_K, \tau]$ by $x$, and $Y^{\tau,x}_s$ by $Y^{\tau,x}_\tau$.
By Corollary A.6 from \cite{delarue}, there exists a constant $\dot\gm_K<\td\gm_K$ such that  for any $x,x' \in\Rnu^n$, $\tau,\tau'\in [T-\dot\gm_K,T]$,
\mmm{
\lb{cont7}
\E\sup_{t\in [T-\dot\gm_K,T]} |X^{\tau,x}_t - X^{\tau',x'}_t|^{p} + \E\sup_{t\in [T-\dot\gm_K,T]} |Y^{\tau,x}_t - Y^{\tau',x'}_t|^{p} \\
\lt \gm_2 \big(|x-x'|^{p} + (1+|x|^{p}) |\tau-\tau'|^{\frac{p}{2}}\big),
}
where $p\gt 2$. Pick  $p>n$. Then, by Kolmogorov's continuity
criterion in Banach spaces (see, e.g., \cite{kunita}), there exists a continuous modification of the map
$[T-\dot\gm_K,T]\x \Rnu^n \to \C([T-\dot\gm_K,T])$,   $(\tau,x) \mto (X^{\tau,x}, Y^{\tau,x})$.
 In particular, the map $(\tau,x) \mto Y^{\tau,x}_\tau$ is continuous a.s. This and \rf{y-bound} imply that $\sup_{\tau,x}|Y^{\tau,x}_\tau| < \dl$ a.s.
 
 Further, according 
  to Corollary A.4 of \cite{delarue} and by the continuity in $(\tau,x)$ 
  obtained above, a.s.,
  \aaa{
  \lb{rel1}
  Y^{\tau,x}_t =  Y^{\tau,X^{\tau,x}_t}_\tau \quad \text{ for each} \;
\tau\in (T-\td\gm_K,T], \, t\in [\tau,T],\, x\in\Rnu^n.
  }
 Therefore,
  $(X^{\tau,x}_t, Y^{\tau,x}_t,Z^{\tau,x}_t)$ is also a solution to original FBSDE \rf{fbsde-new}  on $[\tau,T]$.

{\it Step 3. Differentiability of the FBSDEs solution in $x$. Boundedness of $\E_\tau\, |\pl X^{\tau,x}_t|^p$
and $\E_\tau\, |\pl Y^{\tau,x}_t|^p$.}
Now we proceed with the proof of differentiability. In Steps 3 and 4, we will write
$\bar F$ instead of $\bar F_\dl$ (defined by \rf{gmforce}) to simplify notation, and thus assuming
(without loss of generality)
that $\bar F$ is bounded together with its spatial derivatives up to the second order.

For any function $\al(x)$, define $\Dl^k_\eps \al(x) = \eps^{-1} \big(\al(x+\eps e_k) - \al(x)\big)$, $k=1, \ldots, n$,
where $\{e_k\}_{k=1}^n$ is the orthonormal basis in $\Rnu^n$.
In particular,
$\Dl^k_\eps X_t=\eps^{-1} (X_t^{\tau,x+\eps e_k} - X_s^{\tau,x})$, $k=1, \ldots, n$, and
$\Dl^k_\eps Y_t$, $\Dl^k_\eps Z_t$ are defined similarly.
Further, for a function $\Phi$ (which can be any of the functions $\bar F$, 
$h$, $\bar\eta$, or their gradients with respect to the spatial variables), 
we define $\nab_2 \Phi(t,u,v) = \pl_u \Phi(t,u,v)$, $\nab_3 \Phi(t,u,v)  = \pl_v \Phi(t,u,v)$. Furthermore, we define
\een{
\lb{f7}
\nab^{\eps,k}_2 \Phi_t = \int_0^1 \nab_2 \Phi(t, X_t+ \la\eps \Dl^k_\eps X_t,Y_t)d\la,\\
\nab^{\eps,k}_3 \Phi_t = \int_0^1 \nab_3 \Phi(t,  X_t, Y_t+ \la\eps \Dl^k_\eps Y_t)d\la,
}
and note that 
\aaa{
\lb{f7-}
\nab^{\eps,k}_2 \Phi_t =
 \int_0^1 \nab_2 \Phi(t, (1-\la) X_t^{\tau,x}+ \la X^{\tau,x+\eps e_k}_t,Y_t)d\la,
 }
 and similar for $\nab^{\eps,k}_3 \Phi_t$. 
 In case of just one spatial variable (like in $h$ or $\eta$), we write $\nab$ instead of $\nab_2$ and $\nab^{\eps,k}$ instead of $\nab^{\eps,k}_2$.
Note that
\aaa{
\lb{tayl}
\Dl^k_\eps \Phi_t  = \nab^{\eps,k}_2 \Phi_t \Dl^k_\eps X_t + \nab^{\eps,k}_3 \Phi_t \Dl^k_\eps Y_t.
}
It is immediate to verify that the triple $(\Dl^k_\eps X_t, \Dl^k_\eps Y_t, \Dl^k_\eps Z_t)$ solves the FBSDE
 \aaa{
   \lb{fbsde-diff-1}
\phantom{kkk}
 }
  \vspace{-12mm} 
\aa{
\hspace{7mm}
 \begin{cases}
 \Dl^k_\eps X_t = e_k - \int_\tau^t \big(\Dl^k_\eps Y_s + \nab^{\eps,k} \bar\eta_s \Dl^k_\eps X_s\big) \, ds,\\
 \Dl^k_\eps Y_t  = \nab^{\eps,k} h_T\, \Dl^k_\eps X_T
+ \int_t^T \big(\nab^{\eps,k}_2 \bar F_s \, \Dl^k_\eps X_s + \nab^{\eps,k}_3 \bar F_s \, \Dl^k_\eps Y_s\big) \, ds - \int_t^T   \Dl^k_\eps  Z_s \,dW_s       
\end{cases}
 }
on  the same time interval $[\tau,T]$, 
where we proved the existence and uniqueness of solution to \rf{fbsde-new}.
Additionally, we define $(\Dl^k_0 X_t, \Dl^k_0 Y_t, \Dl^k_0 Z_t)$ as the unique solution
to FBSDE \rf{fbsde-diff-1} whose coefficients are taken at $\eps = 0$.
Remark that setting $\eps = 0$ in \rf{f7-}, we obtain $\nab_2\Phi(t,X_t,Y_t)$ on the
right-hand side. 
  The existence and uniqueness of the triple $(\Dl^k_0 X_t, \Dl^k_0 Y_t, \Dl^k_0 Z_t)$
 follows from Theorem A.1 in \cite{delarue}.

Let us show that for $p\gt 2$, a.s., 
\aaa{
\lb{dif-bounds}
\max \big\{ \E_\tau |\Dl_\eps^k X_t|^p; \;   \E_\tau |\Dl_\eps^k Y_t|^p\big\} \lt   \gm_3 \quad \text{for all} \; \eps\gt 0, \, t\in [\tau,T].
}
It\^o's formula and the BSDE in
 \rf{fbsde-diff-1} imply 
 \mm{
\E_\tau|\Dl_\eps^k Y_t|^p
+  p(p-2)\int_t^T \E_\tau \big[ |\Dl_\eps^k Y_s|^{p-4}\sum_{j=1}^n|(\Dl_\eps^k  Z^j_s,\Dl_\eps^k  Y_s)|^2\big]\,ds \\
+ p\int_t^T\E_\tau \big[ |\Dl_\eps^k  Y_s|^{p-2}|\Dl_\eps^k  Z_s|^2\big] ds  =\E_\tau\big[ | \nab^{\eps,k} h_T\Dl_\eps^k  X_T|^p\big] \\
+ 2p\int_t^T\E_\tau \big[|\Dl_\eps^k  Y_s|^{p-2}(\nab^{\eps,k}_2\bar F_s\Dl_\eps^k  X_s + \nab^{\eps,k}_3 \bar F_s\Dl_\eps^k  Y_s,\Dl_\eps^k  Y_s)\big]\,ds.
}
From here, by the forward SDE in \rf{fbsde-diff-1} and Young's inequality,
it follows that a.s.
$\E_\tau |\Dl_\eps^k Y_t|^p \lt \gm_4 \big( 1+ \int_\tau^T \E_\tau |\Dl_\eps^k Y_t|^p \, ds\big)$ for all $t\in [\tau,T]$ and $\eps\gt 0$,
which,
together with the forward SDE in \eqref{fbsde-diff-1}, implies \rf{dif-bounds}.

 Now let $\zeta_X(t)=  \Dl^k_\eps X_t - \Dl^k_{\eps'} X_t $. Similarly, we define $\zeta_Y(t)$  and $\zeta_Z(t)$. The FBSDE for
 the triple $(\zeta_X(t), \zeta_Y(t), \zeta_Z(t))$ takes the form
 \aaa{
 \lb{fbsde-d11}
  \begin{cases}
\zeta_X(t)= \int_\tau^t  \big(\zeta_Y(s)+ \nab^{\eps,k} \bar\eta_s\, \zeta_X(s)+ \xi^X_s \big)  ds,  \\
\zeta_Y(t) = \nab^{\eps,k} h_T\, \zeta_X(T) + \sig_T + \int_t^T  \big(\nab^{\eps,k}_2 \bar F_s\, \zeta_X(s)\\
 \hspace{9mm}  +\nab^{\eps,k}_3 \bar F_s \, \zeta_Y(s)
+ \xi^Y_s \big)ds  
- \int_t^T \zeta_Z(s)dW_s,  
\end{cases}
 }
 where    $\xi^X_s = (\nab^{\eps,k} \bar\eta_s -\nab^{\eps'\hs,k} \bar\eta_s )\Dl^k_{\eps'} X_s$,
 $\xi^Y_s  =   (\nab^{\eps,k}_2 \bar F_s - \nab^{\eps'\hs,k}_2 \bar F_s)\Dl^k_{\eps'} X_s
 + (\nab^{\eps,k}_3 \bar F_s - \nab^{\eps'\hs,k}_3 \bar F_s)\Dl^k_{\eps'} Y_s$,  and
 $\sig_T = (\nab^{\eps,k} h_T - \nab^{\eps'\hs,k} h_T)\Dl^k_{\eps'} X_T$.
 Note that $\nab^{\eps,k} \bar\eta_s$ and $\nab^{\eps,k} h_T$ are bounded by $K$, 
 and $\nab^{\eps,k}_i \bar F_s$, $i=2,3$, are bounded by $K(1+\dl)$,
 which follows from \rf{f7}. Then, by standard arguments
(which include an application of It\^o's formula to $|\zeta_Y|^2$, elevating 
the both parts to the power $\frac{p}2$, and making use of the estimate
$\E \big|\int_t^T (\zeta_Y(s),\zeta_Z(s)dW_s)\big|^{\frac{p}2}
\lt \gm_5(T-\tau)^{\frac{p}4}\E\sup_{[\tau,T]}|\zeta_Y|^p 
+\eps \E\big(\int_t^T |\zeta_Z(s)|^2 ds\big)^\frac{p}2$),
  there exists a constant $\check \gm_k<\dot\gm_K$ such that
   on the interval $[\tau,T]$ 
 whose length is smaller than $\check \gm_{K}$, for $p\gt 2$, 
 \mmm{
 \lb{f8}
\E \sup_{[\tau,T]}|\zeta_X(t)|^p +  \E\sup_{[\tau,T]} |\zeta_Y(t)|^p  
+ \E \big(\int_t^T|\zeta_Z(s)|^2 ds\big)^\frac{p}2  \\
 \lt \gm_6 \big(\E \,|\sig_T|^p + 
 \E \int_\tau^T  [|\xi^X_s|^p +  |\xi^Y_s|^p] ds \big)\lt \gm_7\, |\eps-\eps'|^p. 
}
The last inequality holds by the definition of $\sig_T$, $\xi^X_s$, 
$\xi^Y_s$, and by virtue of \rf{f7-} and \rf{cont7}. Combining \rf{f8} with 
Corollary A.6 from \cite{delarue}, we obtain that there exists a positive constant $\hat \gm_K < \check\gm_K$ such that
for all $x,x'\in\Rnu^n$, $\tau,\tau'\in  [T-\hat\gm_K, T]$, and $t\in [\tau,T]$, 
\mmm{
\lb{cont9}
\E\sup_{t\in[\tau,T]}|\Dl^k_\eps X^{\tau,x}_t - \Dl^k_{\eps'} X^{\tau'\hs,x'}_t|^p + 
\E\sup_{t\in[\tau,T]}|\Dl^k_\eps Y^{\tau,x}_t - \Dl^k_{\eps'} Y^{\tau'\hs,x'}_t|^p \\
\lt \gm_8 (|\eps-\eps'|^p + |x-x'|^p + |\tau - \tau'|^{\frac{p}2}).
}
By Kolmogorov's continuity criterium, there exists a continuous version of the map
$[0,+\infty) \x [T-\hat\gm_K, T] \x [T-\hat\gm_K, T]\x \Rnu^n \to \Rnu^{2n}$, $(\eps, \tau, t, x) 
\mto(\Dl^k_\eps X^{\tau,x}_t, \Dl^k_\eps Y^{\tau,x}_t)$. 
This means
that the map $[T-\hat\gm_K, T]\x [T-\hat\gm_K, T] \x \Rnu^n \to \Rnu^{2n}$, $(\tau,t,x) \mto (X^{\tau,t,x}_t, Y^{\tau,t,x}_t)$ is differentiable in $x_k$, 
and the derivative is continuous in $(\tau,t,x)$ a.s. 
In particular, there exists an a.s. continuous derivative $\pl_k Y^{\tau,x}_\tau$,
and, by \rf{dif-bounds}, a.s.,
 \aaa{
 \lb{yd11}
 |\pl_{k} Y^{\tau,x}_\tau| <  \big(\gm_3\big)^{\frac1{p}} \quad \text{for all} \;
 (\tau,x)\in  [T-\hat\gm_{K},T] \x \Rnu^n,
 }
 where $\pl_k = \pl_{x_k}$.
 This holds for all $k\in \{1,\ldots, n\}$.
 Moreover, $\gm_3$ does not depend on $\nu$.

{\it Step 4.  Second order differentiability of the FBSDE solution in $x$. Boundedness of $\E_\tau|\pl^2_{ik} Y^{\tau,x}_t|^2$}.
Below, we use the symbol $\pl_k$ for $\pl_{x_k}$ and $\pl^2_{ik}$ for $\pl^2_{x_i x_k}$.
As in Step 3, we write $F$ instead of $F_\dl$ to simplify notation.

Remark that  \rf{f8} implies the differentiability in $x$ of   
$Z^{\tau,x}_t$  with respect to the norm $(\E\int_\tau^T |\ffi(s)|^2 ds)^\frac12$.
Further, the FBSDE for $(\pl_k X_t, \pl_k Y_t, \pl_k Z_t)$ takes the form:
 \aaa{
\lb{fbsde-d-short}
\phantom{kkk
kkk
}
 }
   \vspace{-12mm} 
\aa{
\hspace{9mm}
\begin{cases}
  \pl_k X_t =  e_k + \int_\tau^t \big(\pl_k Y_s + \nab \bar\eta_s \pl_k X_s\big)  ds \\
 \pl_k Y_t = \nab h (X_T)\pl_k X_T +
\int_t^T\big( \nab_2 \bar F_s\pl_k X_s + \nab_3 \bar F_s\pl_k Y_s \big) ds - \int_t^T \pl_k Z_s \,dW_s,
\end{cases}
 }
where $\nab \bar\eta_s = \nab \bar\eta(s,X_s)$, $\nab h_T = \nab h(X_T)$, $\nab_i \bar F_s = \nab_i \bar F (s,X_s,Y_s)$,  $i=2,3$.

As in the previous step, define $\Dl^i_\eps \pl_k X_t=\eps^{-1} 
(\pl_k X_t^{\tau,x+\eps e_i} - \pl_k X_s^{\tau,x})$, $i=1, \ldots, n$, and,
similarly, $\Dl^i_\eps \pl_kY_t$, $\Dl^i_\eps \pl_k Z_t$. Applying the operation 
$\Dl^i_\eps$ to FBSDE \rf{fbsde-d-short}, using  formula
\rf{tayl}, and noticing that for any  functions $\al_1(x)$ and $\al_2(x)$,
$\Dl^i_\eps\big[\al_1(x)\al_2(x)\big] =  \al_1(x)\Dl^i_\eps \al_2(x) + \Dl^i_\eps \al_1(x) \al_2(x+\eps e_i)$, we obtain
the FBSDE
for the triple $(\Dl^i_\eps \pl_k X_t, \Dl^i_\eps \pl_k Y_t,  \Dl^i_\eps \pl_k Z_t)$
\aaa{
  \label{fbsde-2-der}
\begin{cases}
  \Dl^i_\eps \pl_k X_t = - \int_\tau^t \big( \Dl^i_\eps \pl_kY_s+ \nab \bar\eta_s \,  \Dl^i_\eps \pl_k X_s +   \tet^X_{s,\eps} \big) ds,
 \\
 \Dl^i_\eps \pl_kY_t  = \nab h_{T}  \Dl^i_\eps \pl_k X_T + \eta_{T,\eps}
+  \int_t^T  \hs \big(\nab_2 \bar F_s  \Dl^i_\eps \pl_k X_s \\ \hspace{12mm}  + \nab_3 \bar F_s  \Dl^i_\eps \pl_kY_s   + \tet^Y_{s,\eps}\big) ds  
- \int_t^T \hs \Dl^i_\eps \pl_kZ_s  \,dW_s ,
\end{cases}
 }
 where 
  \lb{chi22}
 \een{
  \tet^X_{s,\eps} =  \nab^{\eps,i} \nab \bar\eta_s \, \Dl^i_\eps X_s \,
  \pl_k X_s^{\tau, x+\eps e_i}; \quad
   \eta_{T,\eps} = \nab^{\eps,i} \nab h_T \, \Dl^i_\eps X_T \,
    \pl_k X^{\tau, x+\eps e_i}_T;  \\
   \tet^Y_{s,\eps} =    \nab^{\eps,i}_2\nab_2 \bar F_s \Dl^i_\eps X_s \pl_k X^{\tau, x+\eps e_i}_s 
+ \nab^{\eps,i}_3\nab_3 \bar F_s \Dl^i_\eps Y_s \pl_k Y^{\tau, x+\eps e_i}_s \\ 
\hspace{5mm}+  \nab^{\eps,i}_3\nab_2 \bar F_s \Dl^i_\eps Y_s \pl_k X^{\tau, 
x+\eps e_i}_s  
   +\nab^{\eps,i}_2\nab_3 \bar F_s \Dl^i_\eps X_s \pl_k Y^{\tau, x+\eps e_i}_s.
 }
 Further, the triple $(\Dl^i_0 \pl_k X_t, \Dl^i_0 \pl_k Y_t,  \Dl^i_0 \pl_k Z_t)$
 will denote the unique solution to FBSDE \rf{fbsde-2-der} whose
 coefficients $\tet^X_{s,\eps}$, $\eta_{T,\eps}$, and $\tet^Y_{s,\eps}$ 
 are taken at $\eps = 0$. The existence and uniqueness of the above triple
 follows from Theorem A.1 in \cite{delarue}.
Let us show that, a.s.,  
 \aaa{
\lb{21-bound}
\max\{\E_\tau |\Dl^i_\eps \pl_k X_t|^2, \E_\tau |\Dl^i_\eps \pl_k Y_t|^2\} \lt \mu_1
\quad \text{for all} \; \eps\gt 0, \, t\in [\tau,T].
}
It\^o's formula implies
 \mm{
| \Dl^i_\eps \pl_kY_t |^2
+ \int_t^T  | \Dl^i_\eps \pl_kZ_s|^2 ds  =
 |\nab h(X_T) \Dl^i_\eps \pl_k X_T +  \eta_{T,\eps}|^2 
+ 2\int_t^T ( \nab_2 \bar F_s \,  \Dl^i_\eps \pl_kX_s  \\ 
+  \nab_3 \bar F_s\,  \Dl^i_\eps \pl_kY_s  
  + \tet^Y_{s,\eps}, \Dl^i_\eps \pl_kY_s)\,ds
+ \int_t^T  ( \Dl^i_\eps \pl_kY_s, \Dl^i_\eps \pl_kZ_s dW_s).
}
From here, by using the forward SDE in \rf{fbsde-2-der}, we conclude that
there exists a constant $\bar \gm_{K}< \hat \gm_{K}$, depending only on $K$, such that for $\tau\in [T-\bar \gm_{K} ,T]$,
\aa{
\E_\tau |\Dl^i_\eps \pl_kY_t|^2 \lt \mu_2 \big( 1+ 
 \int_\tau^T  \E_\tau \big(|\tet^X_{s,\eps}|^2 +  |\tet^Y_{s,\eps}|^2\big) ds +
 \E_\tau |\eta_{T,\eps}|^2 \big) \quad \text{a.s.}
}
By the assumptions of the theorem and \rf{dif-bounds}, the right-hand side of the above inequality is bounded a.s.
This implies 
\rf{21-bound}.

Now let us prove the existence of a continuous second derivative of the map
$Y^{\tau,x}_\tau$.
Let $\zeta_X(t)=  \Dl^i_\eps \pl_k X_t -  \Dl^i_{\eps'} \pl_k X_t$, $\zeta_Y(t)=\Dl^i_\eps \pl_k Y_t -  \Dl^i_{\eps'} \pl_k Y_t$, 
$\zeta_Z(\eps,t)=\Dl^i_\eps \pl_k Z_t -  \Dl^i_{\eps'} \pl_k Z_t$. 
The FBSDE for the triple $(\zeta_X(t), \zeta_Y(t), \zeta_Z(t))$ takes the form:
\aaa{
  \label{fbsde-2-zeta}
\begin{cases}
\zeta_X(t) =  -\int_\tau^t \big(\zeta_Y(s) + \nab\bar\eta_{s} \zeta_X(s) + \tet^X_{s,\eps}-\tet^X_{s,\eps'}) \, ds, \\
\zeta_Y(t)   = \nab h_{T} \zeta_X(T) + \eta_{T,\eps} - \eta_{T,\eps'}
+ \int_t^T \big( \nab_2 \bar F_{s} \zeta_X(s) \\ \hspace{9mm} + \nab_3 \bar F_{s} \zeta_Y(s) + \tet^Y_{s,\eps} -  \tet^Y_{s,\eps'}\big) ds  
-  \int_t^T \zeta_Z(s)  \,dW_s. 
\end{cases}
 }
Note that FBSDE \rf{fbsde-2-zeta} has a similar structure 
with FBSDE \rf{fbsde-d11}. 
Thus, similar to \rf{f8}, we conclude that there exists a constant $\mathring \gm_K<\bar \gm_K$ such that
for $\tau\in [T-\mathring \gm_{K} ,T]$,
 \mmm{
 \lb{f9}
\E |\zeta_X(t)|^p +  \E |\zeta_Y(t)|^p  + 
\E \Big(\int_t^T|\zeta_Z(s)|^2 ds \Big)^\frac{p}2 \\
 \lt \mu_3 \big(\E \,|\eta_{T,\eps} - \eta_{T,\eps'}|^p + 
 \E \int_\tau^T  [|\tet^X_{s,\eps}-\tet^X_{s,\eps'}|^p +  | \tet^Y_{s,\eps} -  \tet^Y_{s,\eps'}|^p] ds \big)\lt \mu_4\, |\eps-\eps'|^p
}
on  $[\tau,T]$.
The last inequality holds by \rf{cont7} and \rf{cont9}.
 Combining \rf{f9} with 
Corollary A.6 from \cite{delarue} (similar to the previous step), we obtain that there exists a positive constant 
$\gm_K < \mathring \gm_K$ such that
for all $x,x'\in\Rnu^n$, $\tau,\tau'\in  [T-\gm_K, T]$, and $t\in [\tau,T]$, 
\aa{
\E|\Dl^i_\eps \pl_k Y^{\tau,x}_t - \Dl^i_{\eps'} \pl_k Y^{\tau'\hs,x'}_t|^p 
\lt \mu_5 (|\eps-\eps'|^p + |x-x'|^p + |\tau - \tau'|^{\frac{p}2}).
}
By Kolmogorov's continuity criterium, there exists a continuous version of the map
$[0,+\infty) \x [T-\gm_K, T] \x \Rnu^n \to \Rnu^n$, $(\eps, \tau, x) \mto \Dl^i_\eps \pl_kY^{\tau,x}_\tau$. This means
that the map $[T-\gm_K, T] \x \Rnu^n \to \Rnu^n$, $(\tau,x) \mto \pl_k Y^{\tau,x}_\tau$ is differentiable in $x_i$ 
and the derivative in continuous in $(\tau,x)$ a.s.
Further, \rf{21-bound} implies that 
\aaa{
\lb{2-bound}
|\pl^2_{ik} Y^{\tau,x}_\tau| \lt \sqrt{\mu_2} \quad \text{a.s.}
}
We remark that $\mu_2$ depends only on $K$ and does not depend on $\nu$. Moreover, \rf{2-bound}
holds uniformly in $(\tau,x) \in  [T- \gm_{K},T]\x \Rnu^n$ by continuity.
 This implies that there exists a set $\td\Om$ of full $\PP$-measure 
 such that for all $\om\in\td \Om$, $Y^{\tau,x}_\tau$ is twice continuously differentiable in $x$,
 and, moreover, the derivatives of  $Y^{\tau,x}_\tau$  up to the second order are bounded.

\textit{ Step 5. Solution to random PDE \rf{backward-pde}.}
Define $\bar y(\tau,x,\om) =  Y^{\tau,x}_\tau(\om)$ for each $\om\in \td\Om$. 
Note that $\bar y(\tau,x)$  is $\mc F_{T-\tau}$-measurable and by \rf{rel1}, a.s.,
\aaa{
\lb{144}
Y^{\tau,x}_t = \bar y(t, X^{\tau,x}_t) \quad \text{for all} \; 
\tau,t \in [T-\gm_K,T], \, x\in\Rnu^n.
}
Let us prove that $\bar y(t,x)$ is a solution to \rf{backward-pde}. The idea of the proof is similar to that of Theorem 3.2 in \cite{peng92}.
However, we deal with the random coefficient case. 
Define $\mc L u = \nu\lap u + (u +\bar \eta,\nab)u$.
We have
\aa{
\bar y(t+h,x) - \bar y(t,x) = [\bar y(t+h,x) - \bar y(t+h,X^{t,x}_{t+h})] + [\bar y(t+h,X^{t,x}_{t+h}) -\bar y(t,x)].
}
Since $\bar y$ is of class $\C^{0,2}_b$, we can apply
It\^o's formula to the first term.
Further, by \rf{fbsde-modified} and  \rf{144}, we substitute the second term  with 
$-\int_t^{t+h} \bar F_\dl(s,X^{t,x}_s, \bar y(s, X^{t,x}_s)ds + \int_t^{t+h} Z^{t,x}_s dW_s$. Remark that, by \rf{y-bound},
$\bar F_\dl(s,X^{t,x}_s, \bar y(s, X^{t,x}_s)) = \bar F(s,X^{t,x}_s, \bar y(s, X^{t,x}_s))$ so we can skip the index $\dl$. Thus, we obtain that, a.s.,
\mm{
\bar y(t+h,x) - \bar y(t,x) = -\int_t^{t+h} \mc L \bar y(t+h,X^{t,x}_s) ds
- \sqrt{2\nu}\int_t^{t+h} \nab \bar y(t+h,X^{t,x}_s)dW_s \\
- \int_t^{t+h} \bar Fl(s,X^{t,x}_s, \bar y(s, X^{t,x}_s))ds 
+ \int_t^{t+h} Z^{t,x}_s dW_s
}
for all $(t,x,h)$.
Fix a partition $\mc P= \{\tau = t_0 < t_1 < \dots < t_n = T\}$. Taking the conditional expectation $\E_\tau$ 
and summing up, we obtain that, a.s.,
\aaa{
\lb{p4}
\bar y(\tau,x)- h(x) = \E_\tau \sum_{i=0}^{n-1} \int_{t_i}^{t_{i+1}} \big(\mc L \bar y(t_{i+1},X^{t_i,x}_s)
+\bar F(s,X^{t_i,x}_s, \bar y(s, X^{t_i,x}_s))\big)ds. 
}
Indeed, the conditional expectation of the stochastic integrals is zero by Lemma \ref{lem2}.
Note that the expression under the integral sign is bounded, a.s.,
since $\mc L\bar y(t,x)$ is bounded by what was proved in the previous steps. 

Further, $\mc L \bar y(t,x)$ and $\bar F (s,X^{t,x}_s, \bar y(s, X^{t,x}_s))$ are a.s. continuous in $(t,x)$.
Letting the mesh of $\mc P$ in \rf{p4} go to zero, by the conditional bounded convergence theorem, we obtain
that $\bar y(t,x)$
solves \rf{backward-pde} on $[T-\gm_K,T]\x \Rnu^n$.
Further, by \rf{y-bound}, \rf{yd11}, \rf{2-bound}, and by equation \rf{backward-pde} itself, we conclude that, a.s., $\bar y \in\C^{1,2}_b$.
Finally, as we have already mentioned in Step 1, $\bar y$ is $\mc F_{T-\tau}$-adapted for each $x\in\Rnu^n$.
The theorem is proved. 
\end{proof}

 \subsection{Gradient estimate}

 In this section, we present an FBSDE stochastic method to obtain a uniform in $r$ bound for the gradient $\pl_x y(t,x)$
of the solution $y(t,x)$ to the following final value problem:
\aaa{
\lb{pde1}
\begin{cases}
\pl_t y(t,x) + \frac12\tr(\pl^2_{xx} y(t,x)\sg(t,x)\sg(t,x)^\top) \\ + (\ffi(t,x,y(t,x)),\pl_x)y(t,x) 
 +f(t,x,y(t,x), \pl_x y(t,x)\sg(t,x,y)) = 0,\\
y(T,x) = h(x), \quad x\in \Rnu^n, \; t \in [r,T], \; r\gt 0.
\end{cases}
}
Here $\sg(t,x)^\top$ is the transpose to the matrix $\sg$,
  $\tr(\pl^2_{xx} y(t,x)\sg(t,x)\sg(t,x)^\top)$ is the vector whose $l$-th component is
 the trace of the matrix  $\pl^2_{xx} y_l(t,x)\sg(t,x)\sg(t,x)^\top$, where $y_l(t,x)$ is the $l$-th component
 of $y(t,x)$, and $(\ffi(t,x,y(t,x)),\pl_x)$ is the formal scalar product of $\ffi$ and the vector $\pl_x$ with the coordinates
 $(\frac{\pl}{\pl x_1}, \frac{\pl}{\pl x_2}, \dots, \frac{\pl}{\pl x_n})$.
Equation \rf{pde1} is assumed to be $\Rnu^m$-valued, $\sg(t,x)$,
$\ffi(t,x,y)$, and $f(t,x,y,z)$ 
take values in $\Rnu^{n\x n}$, $\Rnu^n$, and $\Rnu^m$, respectively,
and the arguments of these functions are of appropriate dimensions.

It is well known that the FBSDE associated to \rf{pde1} takes the form (see e.g. \cite{delarue})
\aaa{
\lb{fbsde-grl}
\begin{cases}
X^{\tau,x}_t = x + \int_\tau^t \ffi(s,X^{\tau,x}_s,Y^{\tau,x}_s) ds + \int_\tau^t \sg(s,X^{\tau,x}_s)dW_s,\\
Y^{\tau,x}_t = h(X^{\tau,x}_T) + \int_t^T f(s,X^{\tau,x}_s,Y^{\tau,x}_s, Z^{\tau,x}_s)ds -\int_t^T Z^{\tau,x}_s dW_s,
\end{cases}
}
where $\tau \in [r,T]$, $W_t$ is an $n$-dimensional Brownian motion.


Consider a probability space $(\Om,\mc F, \PP)$, and
for each fixed $\tau\in [0,T]$, define the filtration $\mc F^W_{\tau,t}=\sg\{W_s-W_\tau, s \in [\tau,t]\}  \vee \mc N$, where $\mc N$ is the collection
of $\PP$-null sets.  The solution $(X^{\tau,x}_t,Y^{\tau,x}_t,Z^{\tau,x}_t)$ to \rf{fbsde-grl} is understood
in the same way as  in \cite{delarue}.

In the remainder of this section, we make use of the following assumptions.
\bi
%

\item[\bf (B1)] The functions $f$, $\ffi$, $\sg$, and $h$,
 are differentiable with respect to their spatial variables;
the derivatives $\pl_x \sg$ and $\nab h$ are bounded by a constant $K$,
 and the  other derivatives satisfy the linear growth condition
 on $[0,T]\x \Rnu^n\x \Rnu^m \x \Rnu^{m\x n}$:
 \aa{
 |\pl_{(x,y)}\ffi| +  |\pl_{(x,y,z)}f| \lt K(1+|y|). 
 }

\item[\bf (B2)] Assume  there exists a constant $L>0$
such that for all $(t,x,y,z)\in [0,T]\x \Rnu^n\x \Rnu^m \x \Rnu^{m\x n}$, 
\aa{
&|h(x)| + |\sg(t,x)| \lt L;  \quad  |\ffi(t,x,y)| \lt L(1+|x| +|y|);\\
&|f(t,x,y,z)| \lt L(1 + |y| + |z|).
}
\item[\bf (B3)] Finally, assume there exists a constant $\la>0$ such that for all $(t,x)\in [0,T] \x \Rnu^n$ and $\zeta\in\Rnu^n$,
\aa{
(\sg(t,x)\sg(t,x)^\top\zeta,\zeta)>\la |\zeta|^2.
}
\ei

\begin{lem}
\lb{lem3}
Assume $y(t,x)$ is a $\C^{1,2}_b([0,T]\x\Rnu^n)$-solution to final value problem \rf{pde1} on $[r,T]\x \Rnu^n$.
Then, for any $\tau\in [r,T]$,
\aaa{
\lb{form}
\big(X^{\tau,x}_t, y(t,X^{\tau,x}_t), \pl_x y(t,X^{\tau,x}_t)
\sg(t, X^{\tau,x}_t)\big)
}
is a solution to FBSDE \rf{fbsde-grl} on $[\tau,T]$.
\end{lem}
\begin{proof}
The existence and uniqueness of solution to the SDE
\aaa{
\lb{fsde}
X^{\tau,x}_t = x + \int_\tau^t \ffi(s,X^{\tau,x}_s, y(s, X^{\tau,x}_s) )ds + \int_\tau^t \sg(s, X^{\tau,x}_s)\, dW_s
}
is a classical result under (B1) and (B2).

Now assume that $(X^{\tau,x}_t,Y^{\tau,x}_t,Z^{\tau,x}_t)$ is given by \rf{form}. Then, the forward SDE in \rf{fbsde-grl} is satisfied.
Applying It\^o's formula to $y(t,X^{\tau,x}_t)$ at times $t$ and $T$, we can easily check that the above triple
verifies the backward SDE in \rf{fbsde-grl}.
\end{proof}

 Our main result in this subsection is the following.
\begin{thm}
\lb{grad-est}
Assume (B1)--(B3).
Further assume that $y(t,x)$ is a $\C^{1,2}_b$-solution to final value problem \rf{pde1} on $[r,T]\x \Rnu^n$.
Then,  there exists a constant $\gm_{T,K, L, \la}$,
that depends only on $T$, $K$, $L$, and $\la$,
such that for all  $(x,t)\in \Rnu^n \x [r,T]$,
\aaa{
\lb{bound}
|\pl_x y(t,x)| \lt \gm_{T,K, L, \la}.
}
In particular, the constant $\gm_{T,K, L, \la}$ does not depend on $r$.
%
\end{thm}
\begin{proof}
Everywhere throughout the proof, $\gm^{(i)}_{\mc A}$, $i=1,2,\ldots$,  will denote constants depending only on the set 
of parameters $\mc A$.

\textit{Step 1. Boundedness of $y(t,x)$.}
Let $(X_t^{\tau,x},  Y_t^{\tau,x},  Z_t^{\tau,x})$
be the solution to \rf{fbsde-grl} on $[\tau,T]$ given by \rf{form}.
For simplicity of notations, in what follows, we skip the upper index ${\tau,x}$
using it just where it is necessary. 

It\^o's formula and the backward SDE in \rf{fbsde-grl} imply  
\aaa{
\lb{gen-case}
\E |Y_t|^2 + \int_t^T \E|Z_s|^2 ds = \E|h(X_T)|^2 + \E\int_t^T 2(f(s,X_s,Y_s,Z_s),Y_s)ds.
}
By Assumption (B2), there exists a constant $\gm^{(1)}_L$ such that
\aa{
\E |Y_t|^2 + \int_t^T \E|Z_s|^2 ds \lt L^2 + \gm^{(1)}_{L}\int_t^T\E |Y_s|^2 ds
+ \frac12 \int_t^T \E |Z_s|^2 ds.
}
By Gronwall's inequality,  for all $t\in [\tau,T]$, 
\aa{
\E |Y_t|^2 \lt \gm^{(2)}_{L,T}. 
}
Since $Y^{\tau,x}_t = y(t,X^{\tau,x}_t)$, where $X^{\tau,x}_t$ is the unique solution to \rf{fsde}, then
\aaa{
\lb{bound-y}
|y(\tau,x)| \lt M_{L,T},
}
where $M_{L,T}$ is a constant that depends only on $L$ and $T$. 
 
{\it Step 2. Transformation of the PDE.}
Rewrite PDE \rf{pde1} with respect to
\aaa{
\lb{new-f}
\td y(t,x) = \frac1{\al}\, y (t,x),
}
where $\al= 3M_{L,T}$.
We obtain
\aaa{
\lb{pde2}
\begin{cases}
\pl_t \td y(t, x) + \frac12\tr(\pl^2_{xx} \td y(t, x)(\sg\sg^\top)(t,x)) 
+\big(\ffi(t,x,\al\,\td y(t,x)),\pl_x\big)\td y(t,x) \\
+\frac1{\al}f\big(t,x,\al y(t,x), \al\, \pl_x \td y(t,x)\sg(t,x)\big) = 0,\\
\td y(T,x) = \frac1{\al}h(x).
\end{cases}
}
Let $X_t$ be the solution to SDE \rf{x-sol1} below 
\aaa{
\lb{x-sol1}
X_t = x + \int_\tau^t \ffi(s, X_s,\al \td y(s, X_s)) ds + \int_\tau^t \sg(s,X_s)dW_s.
}
By Lemma \ref{lem3}, the triple
\aaa{
\lb{form1}
X_t, \quad Y_t = \td y(t,X_t), \quad Z_t = \pl_x \td y(t,X_t) \sg(t, X_t)
}
is the solution to the associated FBSDE
\aaa{
\lb{fbsde-grl2}
\begin{cases}
X_t = x + \int_\tau^t  \ffi(s,X_s,\al Y_s) ds  + \int_\tau^t  \sg(s,X_s)dW_s,\\
Y_t = \frac1{\al} h( X_T) + \int_t^T \frac1{\al} f(s, X_s,\al Y_s, \al Z_s)ds -\int_t^T Z_s dW_s.
\end{cases}
}
Although the solution triple, defined by \rf{form1}, is different than the triple defined by \rf{form}--\rf{fsde},
we denote it again by $(X_t,Y_t,Z_t)$ for simplicity of notation.

{\it Step 3. Boundedness of $\E\exp{\Big\{\frac{\la}4 \int_\tau^T |\nab\td y(s,X_s)|^2 ds\Big\}}$.}
Note that \rf{bound-y} and \rf{new-f} imply that $|\td y(\tau,x)| \lt  \frac13$ for all $\tau\in [r,T]$
by the choice of $\al$, and, therefore,
by \rf{form1},
\aaa{
\lb{bound-td-y}
 |Y_t| \lt \frac13 \quad \text{for all} \; t\in [\tau,T] \quad \text{a.s.}
}
By It\^o's product formula and \rf{fbsde-grl2}, we obtain
\mm{
|Y_t|^2 + \int_t^T |Z_s|^2 ds = \frac1{\al^2}|h(X_T)|^2 +
2\int_t^T(\frac1{\al}f(s, X_s,\al Y_s, \al Z_s), Y_s)ds\\
+2\int_t^T(Y_s,Z_sdW_s)
\lt \gm^{(3)}_{L,\al}\Big(1 + \int_t^T|Y|_s\, ds +  \int_t^T |Y|^2_s \,ds\Big)
+\frac12\int_t^T|Z_s|^2 ds\\
+2\int_t^T(Y_s,Z_sdW_s).
}
By \rf{bound-td-y}, there exists a constant $\gm^{(4)}_{L,T}$ such that
\aa{
\frac12 \int_t^T |Z_s|^2 ds \lt \gm^{(4)}_{L,T} +2\int_t^T(Y_s,Z_sdW_s).
}
This implies
\mm{
\exp{\Big\{\frac12 \int_t^T |Z_s|^2 ds\Big\}} \lt \gm^{(5)}_{L,T} \,
\exp{\Big\{2\int_t^T(Y_s,Z_sdW_s) - 2\sum_{i=1}^n \int_t^T (Y_s,Z^i_s)^2 ds\Big\}}\\
\x\exp{\Big\{\frac{2}{9} \int_t^T |Z_s|^2 ds\Big\}}.
}
Therefore, 
\aaa{
\lb{DD}
\exp{\Big\{\frac14 \int_t^T |Z_s|^2 ds\Big\}} \lt \gm^{(5)}_{L,T} \, \exp{\Big\{2\int_t^T(Y_s,Z_sdW_s) -
2\sum_{i=1}^n \int_t^T (Y_s,Z^i_s)^2 ds\Big\}}.
}
Note that on the right-hand side we have a Dol\'eans-Dade exponential of a martingale
considered as a process with respect to $T$ while $t$ is fixed.
Indeed, by (B2) and \rf{form1}, the Novikov condition
$\E\big[\exp\{ \sum_{i=1}^n \int_t^T (Y_s,Z^i_s)^2 ds\}\big]<\infty$ is fulfilled.
Therefore, the  expectation  of the exponential on
the right-hand side of \rf{DD} equals to one.
Finally, representation \rf{form1} for $Z_s$ and (B3) imply 
\aaa{
\lb{compare}
\E\exp{\Big\{\frac{\la}4 \int_\tau^T |\nab\td y(s,X_s)|^2 ds\Big\}}
\lt \gm^{(5)}_{L,T}.
}

{\it Step 4. Obtaining an a priori bound for $\pl_x y(t,x)$.}
Since any solution to the final value problem \rf{pde1} is bounded by $M_{T,L}$, introduce $\hat \ffi$ and $\hat f$ as follows
\aa{
\hat \ffi(t,x,y) = \ffi(t,x,y\xi_{M_{T,L}}(y)) \quad \text{and} \quad \hat f(t,x,y,z) = f(t,x,y\xi_{M_{T,L}}(y),z),
}
where $\xi_{M_{T,L}}(y)$ is a $\C^\infty$-cutting function for the ball $B_{M_{T,L}}$ introduced in Remark \ref{rem2.2}.
Note that by (B1), $\hat\ffi$ and $\hat f$ 
possess bounded derivatives w.r.t. the spacial variables. 
Let $\gm^{(6)}_{K,L,T}$ be the common bound for these spatial derivatives.
This bound depends on $K$, and  on $T$, $L$ via  the constant $M_{T,L}$. Observe that the solution
$(X_t,Y_t,Z_t)$ to FBSDE \rf{fbsde-grl2},  given by \rf{form1}, is also  a solution to
\aaa{
\lb{fbsde-grl3}
\begin{cases}
X_t = x + \int_\tau^t  \hat\ffi(s,X_s,\al Y_s) ds  + \int_\tau^t  \sg(s,X_s)dW_s,\\
Y_t = \frac1{\al} h( X_T) + \int_t^T \frac1{\al} \hat f(s, X_s,\al Y_s, \al Z_s)ds -\int_t^T Z_s dW_s.
\end{cases}
}
Let $(\pl_x X_s, \pl_x Y_s, \pl_x Z_s)$ denote the derivative of the solution to
 FBSDE \rf{fbsde-grl3} w.r.t. the initial data $x$. Further, for the function $\hat f(t,x,y,z)$, $\nab_2 \hat f = \pl_x \hat f$, 
 $\nab_3 \hat f = \pl_y \hat f$, and $\nab_4 \hat f = \pl_z \hat f$. For the function $\hat\ffi$, the derivatives $\nab_2$ and $\nab_3$
 are defined similarly. In case of just one spatial variable, as in the function $\sg$, we skip the index $2$.
Remark that under (B1)-(B2), the differentiability of the solution $X^{\tau,x}_t$ to SDE \rf{fsde} is well known 
and the derivative process satisfies
\aa{
\pl_x X_{t} = I + \int_{\tau}^{t} \nab\td \ffi_s \pl_x X_s ds +
\int_{\tau}^{t} \nab\sg_s\pl_x X_s dW_s,
}
where 
 \aa{
 \td \ffi(t,x) = \hat\ffi(t, x,\al \td y(t,x)),
 }
$\td\ffi_s$ and $\sg_s$ are abbreviations for $\td\ffi(s,X_s)$  and $\sg(s,X_s)$, respectively.
An application of It\^o's formula gives
 \mmm{
 \lb{m6}
|\pl_x X_t|^2 = 1 + 2\int_\tau^t (\nab \td \ffi_s\pl_x X_s, \pl_x X_s) ds 
 + 2\sum_{k=1}^n\int_\tau^t (\nab   \sg^k_s\pl_x X_s,\pl_x X_s) dW^k_s \\
 + \sum_{k=1}^n\int_\tau^t  |\nab  \sg^k_s \pl_x X_s|^2 ds,
 } 
 where $\sg^k_s = (\sg_s,e_k)$.
 Define
 \aa{
 \tet_s =
 \begin{cases}
 \frac{\pl_x X_s}{|\pl_x X_s|} \quad \text{if} \; \pl_x X_s\ne 0,\\
 0 \quad \text{otherwise}.
 \end{cases}
 }
Equation \rf{m6} becomes
 \mm{
  |\pl_x X_t|^2 = 1 + 2\int_\tau^t (\nab \td \ffi_s\tet_s, \tet_s)|\pl_x X_s|^2 ds
 + \sum_{k=1}^n\int_\tau^t  |\nab  \sg^k_s \tet_s|^2  |\pl_x X_s|^2  ds\\
  + 2\sum_{k=1}^n\int_\tau^t (\nab  \sg^k_s\tet_s, \tet_s)|\pl_x X_s|^2 dW^k_s.
 }
This implies the following representation for $|\pl_x X_t|^2$ via the Dol\'eans-Dade exponential:
\aa{
|\pl_x X_t|^2 = &e^{-1}\exp \Big\{
\int_\tau^t \big[ 2(\nab \td \ffi_s\tet_s, \tet_s)+  \sum_{k=1}^n \big( |\nab  \sg^k_s \tet_s|^2
+2(\nab  \sg^k_s \tet_s,\tet_s)^2\big) \big]   ds \Big\}\\
\x & \exp \Big\{ 2\sum_{k=1}^n \int_\tau^t (\nab  \sg^k_s \tet_s,\tet_s)  dW^k_s
- 4\sum_{k=1}^n \int_\tau^t (\nab  \sg^k_s \tet_s,\tet_s)^2 ds
\Big\}.
}
In the above expression,  the term
$2\sum_{k=1}^n\int_\tau^t (\nab \sg^k_s \tet_s,\tet_s)^2 ds$
was added and subtracted so we could get the estimate
\aaa{
\lb{nab-x-est}
|\pl_x X_t|^2 \lt &\exp \Big\{
2 \int_\tau^t \big( 2|\nab \td \ffi_s|+ 3|\nab \sg_s|^2\big) ds \Big\}\\
+ &\exp \Big\{ 4\sum_{k=1}^n \int_\tau^t (\nab  \sg^k_s \tet_s,\tet_s)  dW^k_s
- 8\sum_{k=1}^n \int_\tau^t (\nab  \sg^k_s \tet_s,\tet_s)^2 ds \Big\}. \notag
}
Since  $\nab \td \ffi(t,x) = \nab_2\hat \ffi(t, x,\al \td y(t,x)) +\al \nab_3\hat \ffi(t, x,\al \td y(t,x))\pl_x \td y(t,x)$, 
\aa{
|4\nab \td \ffi_s| \lt 4\gm^{(6)}_{K,L,T}(1+ \al |\nab \td y(s,X_s)|)
\lt  4\gm^{(6)}_{K,L,T} + \frac{16  (\gm^{(6)}_{K,L,T})^2 \al^2}{\la} + \frac{\la}4 \, |\nab \td y(s,X_s)|^2.
}
Taking the  expectation of the both parts of \rf{nab-x-est}, we obtain
\aaa{
\lb{est-nab-x2}
\E|\pl_x X_t|^2 \lt \gm^{(7)}_{K,L,T,\la} \,
\E\exp\Big\{ \frac{\la}{4}  \int_\tau^T \big|\nab \td y(s,X_s)\big|^2 ds   \Big\} + 1 \lt \gm^{(8)}_{K,L,T,\la},
}
where the last inequality holds by \rf{compare}.

Further, let us estimate $\E|\pl_x Y_t|^2$. Applying
It\^o's product formula and using the backward SDE in \rf{fbsde-grl3},
we obtain that
\mmm{
\lb{est2}
\E|\pl_x Y_t|^2 + \int_t^T \E|\pl_x Z_s|^2 ds
 = \frac1{\al^2}\E|\nab h_T\pl_x X_T|^2  \\+
2\int_t^T \E\big(\frac1{\al}\nab_2 \hat f_s \pl_x X_s +
\nab_3 \hat f_s \pl_x Y_s + \nab_4 \hat f_s \pl_x Z_s,\pl_x Y_s\big)
ds
\lt \gm^{(9)}_{K,T, L}\Big(\E|\pl_x X_T|^2 \\+  \int_t^T \E|\pl_x X_s|^2 ds
+ \int_t^T \E|\pl_x Y_s|^2ds\Big) + \frac12\int_t^T \E|\pl_x Z_s|^2 ds.
}
By \rf{est-nab-x2} and Gronwall's inequality,
\aa{
\E|\pl_x Y_t|^2 \lt \gm^{(10)}_{K, L, T, \la}.
}
Evaluating at $t=\tau$,
and taking into account that $\td y$ and $y$ are related by the formula $y(t,x)=\al\td y(t, x)$,
we obtain the final estimate, i.e., there exists a constant $\gm_{K, L, T, \la}$ such that
\aa{
|\pl_x y(\tau,x)| \lt \gm_{K, L, T, \la}.
}
The theorem is proved.
\end{proof}
\subsection{Global existence}
We start with a lemma on the uniqueness of a $\C^{1,2}_b$-solution to Cauchy problem \rf{rforced}.
\begin{lem}
\lb{lem-unique}
Assume (A1)--(A3). Then, problem \rf{rforced} can have at most one 
pathwise $\C^{1,2}_b([0,T]\x\Rnu^n)$-solution on $[0,T]$.
\end{lem}
\begin{proof}
 Assume there are two solutions  $y_1, y_2\in\C^{1,2}_b([0,T]\x\Rnu^n)$
 to problem \rf{rforced},
and let $y=y_1 - y_2$. Then, $y(t,x)$ solves the problem
\aaa{
\lb{linear1}
\begin{cases}
\pl_t y(t,x)   =  \nu \Delta y(t,x) - (\eta(t,x) + y_1,\nabla) y(t,x)  \\  \hspace{2mm} +\big(\Phi(t,x)+\pl_x y_2\big) y(t,x) = 0,\qquad
y(0,x) = 0,
\end{cases}
}
where $\Phi(t,x) = \int_0^1 \pl_y F(t,x, \la y_1 + (1-\la)y_2) d\la$. 
Then, $y(t,x) = 0$  since we can express $y(t,x)$ via the fundamental solution to \rf{linear1}.
\end{proof}
Let us proceed with the global existence.
Define the sequence of stopping times 
\aaa{
\lb{TN}
T_N =  T \we 
\inf \big\{t \in (0,T]: \, \|\eta(t,\fdot)\|_{\C^4_b(\Rnu^n)} >N\big\}, 
}
where $N>0$ is an integer. Note that
since $\eta \in \C^{0,4}_b([0,T]\x \Rnu^n)$ on $\Om_0$, then the 
stopping time $T_N$ is non-zero on $\Om_0$. 
Furthermore, we define
\aaa{
\lb{etaN}
\eta_{N}(t,x) = \eta(t\we T_N,x)  \quad \text{and}  \quad
h_N(x) = h(x)\ind_{\{\|h\|_{\C^2_b(\Rnu^n)} \lt N\}}.
}
Note that for each $\om\in\Om_0$, 
 $\|\eta_{N}\|_{\C^{0,4}_b([0,T]\x \Rnu^n)} \lt N$.
 
 The existence and uniqueness of a global solution
to \rf{viscous} is case $\eta=\eta_N$ is given by Lemma \ref{globalN} below.
\begin{lem}
\lb{globalN}
Let (A1)--(A3) hold. Then, there exists a unique $\mc F_t$-adapted $\C^{0,2}_b$-solution to 
\aaa{
\lb{viscN}
y(t,x) =  h_N(x) + \int_0^t  \big[f(s,x,y) - (y,\nabla)y(s,x)+ \nu\Dl y(s,x)\big] ds +  \eta_N(t,x).
 }
\end{lem}
\begin{proof}
Define $F_{N}(t,x,y)$ by  \rf{rforce} via $\eta_{N}$. Then, 
 $|F_N(t,x,y)|+ |\nab_{(x,y)}F_N(t,x,y)| + |\nab^2_{(x,y)}F_N(t,x,y)| \lt K_N(1+|y|)$,
where $K_N>N$ is a deterministic  constant depending only on $N$.
Consider the backward equation associated to \rf{viscN} by means of substitution 
\rf{substitution} and the time change:
\aaa{
\lb{backward-pdeN}
\bar y(t,x)  =  h_N(x) +  \int_t^T  \big[ \nu \lap \bar y(s,x) - (\bar \eta_{N}(t,x) + \bar y, \nab)\bar y(s,x) + \bar F_{N}(s,x,\bar y) \big] ds.
}
Here 
$\bar F_{N}(t,x,y) = F_{N}(T-t,x,y)$ and $\bar \eta_{N}(t,x) = \eta_{N}(T-t,x)$.

By Theorem \ref{adapted},  on a  deterministic interval $[T-\gm_{K_N}, T]$,
where $\gm_{K_N}$ is the small constant defined by Theorem \ref{adapted},
 there exists an $\mc F_{T-t}$-adapted  $\C^{1,2}_b$-solution $\bar y_N(t,x)$ 
to  equation \rf{backward-pdeN}.
 Then, $y_N(t,x) = \bar y_N(T-t,x) + \eta_N(t,x)$ is an $\mc F_t$-adapted $\C^{0,2}_b$-solution to \rf{viscN} which exists on some set $\Om_N\sub\Om_0$, $\PP(\Om_N) = 1$.
 Remark that for each $\om\in\Om_N$, $\bar y_N(t,x,\om)$ is also a pathwise solution
 to \rf{backward-pdeN}. By Theorem \ref{grad-est}, $\pl_x \bar y_N(t,x,\om)$ is bounded by a constant 
 $\mu_{K_N,T}$ depending only on $K_N$ and $T$ but not depending on the length of the time interval $\gm_{K_N}$.
 Further remark that  $\mu_{K_N,T}$ is the same for all $\om\in\Om_N$.

 Now take $t_1 = \gm_{K_N}$ and consider the equation
 \mmm{
 \lb{visc2}
y(t,x) =  y_N(t_1,x)+ \int_{t_1}^t  \big[f(s,x,y) - (y,\nabla)y(s,x)+ \nu\Dl y(s,x)\big] ds \\ 
+  \eta_N(t,x) - \eta_N(t_1,x).
 }
Note that $\mc F_t = \sg\{B_s, s\in [t_1, t]\} \vee \mc F_{t_1}$ and $y_N(t_1,x)$ is
 $\mc F_{t_1}$-measurable. Further,
by what was proved, $\pl_x y_N(t_1,x)$ is bounded by $\mu_{K_N,T}$.
 Hence, by Theorem
\ref{adapted}, there exists a constant $\gm'_{K_N}$ such that on the time interval $[t_1, t_1 +\gm'_{K_N}]$,
there exists a $\C^{0,2}_b$-solution to \rf{visc2}. Furthermore, for each $t\in [t_1, t_1 +\gm'_{K_N}]$,
this solution is $\mc F_t$-adapted.
In the similar manner, a $\C^{0,2}_b$-solution to \rf{viscN} can be built on the next successive interval
$[t_2,t_2+ \gm'_{K_N}]$, where $t_2 =  \gm_{K_N} + \gm'_{K_N}$. It is important to mention that the initial condition on each short-time interval
has a bounded derivative in $x$ (by the constant $\mu_{K_N,T}$) by Theorem \ref{grad-est}. By glueing the solutions on short-time intervals, we obtain a 
$\C^{0,2}_b$-solution to \rf{viscN} on $[0,T]$.
Remark that this solution is unique by Lemma \ref{lem-unique} since \rf{viscN} can be reduced to equation of type \rf{rforced}
by  substitution \rf{substitution}.
\end{proof}
The main result of this work is Theorem \ref{main} below which
 gives the existence of an $\mc F_t$-adapted $\C^{0,2}_b$-solution to equation \rf{viscous}.
\begin{thm}
\lb{main}
Assume  (A1)--(A3). Then, there exists a unique  $\C^{0,2}_b$-solution to equation  \rf{viscous}
 which is $\mc F_t$-adapted for each $x\in\Rnu^n$.
\end{thm}
 \begin{proof}
Consider equation \rf{viscous}
 for a fixed $\om_0\in\cap_N\Om_N$, 
 where $\Om_N$ is the set of $\om$, where $y_N$ solves \rf{viscN},
 i.e., we regard \rf{viscous} as a deterministic equation. Then, $\eta(t,x,\om_0)$ can be regarded as a bounded
 function in $t$ and $x$. Applying Lemma 
\ref{globalN}, to deterministic equation \rf{viscous}, we obtain the existence and uniqueness of 
a $\C^{0,2}_b$-solution $y(t,x,\om_0)$. Pick an integer $N>0$ such that $\|h(\fdot,\om_0)\|_{\C^2_b(\Rnu^n)} \lt N$. Then,
$h(\fdot,\om_0) = h_N(\fdot,\om_0)$. Further
note that on $[0,T_N(\om_0)]$, equations \rf{viscous} and \rf{viscN} coincide. By Lemma \ref{lem-unique},
$y_N(t,x,\om_0) = y(t,x,\om_0)$ on  $[0,T_N(\om_0)]$. Since $T_N(\om_0) \to T$ as $N\to \infty$, 
then $y_N(t,x,\om_0) \to y(t,x,\om_0)$. This is valid for any $\om_0\in \cap_N\Om_N$. Therefore, 
$y(t,x,\om)$ is $\mc F_t$-adapted.
 \end{proof}

\section{Vanishing viscosity limit} 

\lb{sec5}

Here we investigate the behavior of the solution to \rf{viscous} when the viscosity $\nu$ goes to zero.
Throughout this section, the $\C^{2}_b$-norm of the function $h(x)$ is assumed bounded in $\om$.
At first, we assume that $\eta(t,x)=\eta_N(t,x)$, where $\eta_N(t,x)$ is defined by \rf{etaN}.
 This will allow us to prove 
that the local  vanishing viscosity limit  for equation \rf{rforced}  exists on $[0,\gm_{K_N}]$, where 
$\gm_{K_N}$ is defined in the proof of Lemma \ref{globalN}.

In what follows, $\beta_i$, $i=1,2,\ldots$, denote positive constants, and $\E_\tau$ denote the  conditional expectation with respect to $\mc F_{T-\tau}$. 

\begin{lem}
\lb{lem81}
Assume (A1)--(A3). 
Further assume that $\eta=\eta_N$, and $\|h\|_{\C^2_b}$ is bounded in $\om\in\Om_0$.
Then, for all $\om\in\Om_0$,
the system of forward-backward random equations
\aaa{
\lb{bsde-inviscid}
\begin{cases}
  X^{\tau,x,0}_t = x -\int_{\tau}^t\big(\bar \eta(s,X^{\tau,x,0}_s) + Y^{\tau,x,0}_s \big)ds, \\
Y^{\tau,x,0}_t = h(X^{\tau,x,0}_T) + \int_t^T  \bar F(s, X^{\tau,x,0}_s, Y^{\tau,x,0}_s) \, ds
\end{cases}
}
possesses a unique solution $(X^{\tau,x,0}_t, Y^{\tau,x,0}_t)$ on $[T-\gm_{K_N},T]$ which is continuous in $(\tau,x,t)$.
\end{lem}

\begin{proof}
Forward-backward system \rf{bsde-inviscid} is a particular case of FBSDE \rf{fbsde-new}.
Therefore, if $T-\tau<\gm_{K_N}$, 
then \rf{bsde-inviscid} has a unique solution $(X^{\tau,x,0}_t, Y^{\tau,x,0}_t)$ for each fixed $\om\in\Om_0$.
Further, the uniform boundedness of $Y^{\tau,x,0}_t$ is a direct consequence of the backward equation in \rf{bsde-inviscid}
and Gronwall's inequality. 
Furthermore, \rf{cont7} can be proved for \rf{bsde-inviscid}  pathwise and without involving expectations.
This implies the uniform in $t\in [T-\gm_{K_N},T]$ continuity of the solution $(X^{\tau,x,0}_t, Y^{\tau,x,0}_t)$
in $(\tau,x)$ (as before, it is assumed that $(X^{\tau,x,0}_t, Y^{\tau,x,0}_t)$ is extended to 
$[T-\gm_{K_N}, \tau]$ by $(x, Y^{\tau,x,0}_\tau)$). Therefore, the solution $(X^{\tau,x,0}_t, Y^{\tau,x,0}_t)$
is continuous in $(\tau,x,t)\in [T-\gm_{K_N},T]\x \Rnu^n\x  [T-\gm_{K_N},T]$.
\end{proof}
For each $(t,x,\om) \in [T-\gm_{K_N},T] \x \Rnu^n\x \Om_0$, we define
\aaa{
\lb{y00}
\bar y_0(t,x) = Y^{t,x,0}_t.
}
Let for any viscosity $\nu\in (0,\nu_0]$, where $\nu_0>0$ is a fixed parameter,
$\bar y_\nu(t,x)$ denote the unique $\mc F_{T-t}$-adapted $\C^{1,2}_b$-solution 
to \rf{backward-pde}. 
In the lemma below, we will treat $\nu$ as a ``time'' parameter and
$\bar y_{\fdot\!}:  [0,\nu_0]\x \Om \to \C_b([T-\gm_{K_N},T]\x \Rnu^n)$, $(\nu,\om)\mto \bar y_\nu(\fdot,\fdot)$, as a stochastic
process with values in $\C_b([T-\gm_{K_N},T]\x \Rnu^n)$.
\begin{lem}
\lb{lem999}
Under assumptions of Lemma \ref{lem81},
there exists a constant $\dot\gm_{K_N}<\gm_{K_N}$ such that
there is a continuous version of
\aaa{
\lb{nu-map}
\bar y_{\fdot\!}: [0,\nu_0]\x \Om \to \C_b([T-\dot\gm_{K_N},T]\x \Rnu^n), \quad
(\nu,\om)\mto \bar y_\nu(\fdot,\fdot).
}
\end{lem}
\begin{proof}
Let $(X^{\tau,x,\nu}_t, Y^{\tau,x,\nu}_t, Z^{\tau,x,\nu}_t)$ be the solution to \rf{fbsde-new} associated to $\nu\in (0,\nu_0]$.
As before, sometimes we skip the upper index $(\tau,x)$ (but keep $\nu$).
We have
\aaa{
\lb{fbsde-55}
\begin{cases}
 X^{\nu}_t - X^{\bar \nu}_t = \int_\tau^t \big[ \bar \eta(s,X^{\nu}_s) - \bar \eta(s,X^{\bar \nu}_s) + Y^{\nu}_s -  Y^{\bar \nu}_s\big]ds
 + (\sqrt{2\nu}-\sqrt{2\bar \nu})(W_t-W_\tau),\\
Y^{\nu}_t -  Y^{\bar \nu}_t= h ( X^{\nu}_T) - h ( X^{\bar \nu}_T)
+ \int_t^T  (\bar F_\dl(s, X^{\nu}_s, Y^{\nu}_s) -  \bar F_\dl(s, X^{\bar \nu}_s, Y^{\bar \nu}_s)) \, ds\\
\hspace{9cm} - \int_t^T (Z^{\nu}_s - Z^{\bar \nu}_s) dW_s,
\end{cases}
}
where $\bar F_\dl$ is defined by \rf{gmforce}.
Note that $Z^{\tau,x,0}_t = 0$. By Gronwall's inequality, the forward SDE implies that a.s.
\aaa{
\lb{x0}
 \E_\tau|X^{\nu}_t - X^{\bar \nu}_t|^{2} \lt
\beta_1 \big[ (T-\tau)^{2}\E_\tau |Y^{\nu}_t-  Y^{\bar \nu}_t|^{2} ds + (T-\tau)|\nu-\bar \nu|\big].
}
It\^o's formula applied to the BSDE in \rf{fbsde-55} gives
\mmm{
\lb{ineq1}
\E_\tau|Y^{\nu}_t -  Y^{\bar \nu}_t|^{2}  \lt
\E_\tau |h ( X^{\nu}_T) - h ( X^{\bar \nu}_T)|^{2}\\
+ 2\E_\tau \int_t^T 
(\bar F_\dl(s, X^{\nu}_s, Y^{\nu}_s) -  \bar F_\dl(s, X^{\bar \nu}_s, Y^{\bar \nu}_s), Y^{\nu}_s -  Y^{\bar \nu}_s ) \big] ds \quad \text{a.s.}
} 
From \rf{x0} and \rf{ineq1} it follows that there exists a positive constant $\dot\gm_{K_N}<\gm_{K_N}$ such that 
for each fixed $\nu$ and $\bar\nu$, a.s.,
\aaa{
\lb{y0}
 |y_\nu(\tau,x)-y_{\bar \nu}(\tau,x)|  
\lt \beta_2 |\nu-\bar \nu| \quad \text{for all} \,
x\in\Rnu^n,  \, \tau\in [T-\dot\gm_{K_N},T].
}
Remark that since for each fixed $\nu$ and $\bar\nu$, 
$Y^{\tau,x,\nu}_\tau$ and  $Y^{\tau,x, \bar \nu}$ possess $(\tau,x)$-continuous
modifications, \rf{y0} holds on a set of full $\PP$-measure that does not depend 
on $\tau$ and $x$.
Further remark that the constant $\beta_2$ on the right-hand side of \rf{y0} does not depend on $\tau$ and $x$. 
Therefore, for an integer $p>1$,
\aaa{
\lb{yy}
\E \sup_{x\in\Rnu^n, \tau\in [T-\dot\gm_{K_N},T]}  |y_\nu(\tau,x)-y_{\bar \nu}(\tau,x)|^{2p}
\lt \beta_3|\nu-\bar \nu|^p.
}
By Kolmogorov's continuity theorem (\cite{kunita}, p. 31), there is 
an a.s. $\nu$-continuous version of the stochastic process
$\bar y_{\cdot}\!: [0,\nu_0]\x \Om \to \C_b([T-\dot\gm_{K_N},T]\x \Rnu^n)$,
$(\nu,\om)\mto \bar y_\nu(\fdot,\fdot)$. 
\end{proof}
Lemma \ref{vanvisc} below states the existence of a local vanishing viscosity limit of equation \rf{backward-pde} for $\eta=\eta_N$.
\begin{lem}
\lb{vanvisc}
Let assumptions of Lemma \ref{lem81} be fulfilled.
Then, there exists a positive constant 
$\beta_{K_N}<\dot\gm_{K_N}$ such that
$\bar y_0(t,x)$, defined by \rf{y00}, is a $\C^{1,1}_b$-solution to
 equation \rf{backward-pdeN} with $\nu = 0$
on $[T- \beta_{K_N},T]$. 
Moreover,
as $\nu\to 0$, a.s., $\bar y_\nu(t,x) \to \bar y_0(t,x)$  uniformly in $(x,t)\in\Rnu^n\x [T- \beta_{K_N},T]$,
where  $\bar y_\nu$ is  the $\nu$-continuous version defined by \rf{nu-map}.
\end{lem}
\begin{proof}
Let us prove that for each fixed $x\in\Rnu^n$ and $\tau \in  [T- \beta_{K_N},T]$,
we can take a limit in \rf{backward-pde} as $\nu\to 0$ in the space $L_2(\Om)$,
where $ \beta_{K_N}$ is an appropriate small constant.
Note that the proof
of differentiability of the FBSDE solution (Step 3 of the proof 
 of Theorem \ref{adapted}) holds for the case $\nu=0$ (with $Z^{\tau,x,0}_t=0$).
Therefore, $(X^{\tau,x,0}_t, Y^{\tau,x,0}_t)$ is differentiable in $x$, and 
$(\pl_k X^{\tau,x,0}_t, \pl_k Y^{\tau,x,0}_t, 0)$ satisfies \rf{fbsde-d-short}.
The FBSDE for the triple  $(\pl_k X^{\nu}_t - \pl_k X^{0}_t, \pl_k Y^{\nu}_t - \pl_k Y^{0}_t, \pl_k Z^{\nu}_t)$
takes the form 
\aaa{
\lb{fbsde-nu0}
\begin{cases}
 \pl_k X^{\nu}_t - \pl_k X^{0}_t = - \int_\tau^t\big( \nab \bar \eta(s, X^{0}_s)(\pl_k X^{\nu}_s
  - \pl_k X^{0}_s) + \pl_k Y^{\nu}_s - \pl_k Y^{0}_s+ \xi^X_\nu(s)  \big) ds  \\
\pl_k Y^{\nu}_t - \pl_k Y^{0}_t = \nab h(X^{0}_T)(\pl_k X^{\nu}_T - \pl_k X^{0}_T) 
+ \int_t^T \big[ \nab_2 \bar F_\dl(s, X^{0}_s, Y^{0}_s)(\pl_k X^{\nu}_s - \pl_k X^{0}_s) \\
\hspace{2cm} 
+ \, \nab_3\bar F_\dl(s, X^{0}_s, Y^{0}_s)(\pl_k Y^{\nu}_s - \pl_k Y^{0}_s) + \xi^Y_\nu(s) \big] ds 
+  \int_t^T \pl_k Z^{\nu}_s dW_s
+ \sig^Y_{T,\nu},
\end{cases}
}
where $\xi^X_\nu(s) = -\big(\nab \bar \eta(s,X^{\nu}_s) - \nab \bar\eta(s, X^{0}_s)\big)\pl_k X^{\nu}_s$,
$\sig^Y_{T,\nu} = \big(\nab h(X^{\nu}_T) - \nab h(X^{0}_T)\big)\pl_k X^{\nu}_T$,
\vspace{-3mm}
\mm{
\xi^Y_\nu(s)  = 
 \big(\nab_2 \bar F_\dl(s,X^{\nu}_s, Y^{\nu}_s) - \nab_2 \bar F_\dl(s,X^{0}_s, Y^{0}_s) \big)\pl_k X^{\nu}_s \\
+ \big(\nab_3 \bar F_\dl(s,X^{\nu}_s, Y^{\nu}_s) - \nab_3 \bar F_\dl(s,X^{0}_s, Y^{0}_s) \big)\pl_k Y^{\nu}_s.
}
From \rf{fbsde-nu0}, by standard arguments, we obtain that
there exists a constant
$ \beta_{K_N} < \dot\gm_{K_N}$ such that for all $\tau\in [T-  \beta_{K_N}, T]$,
$x\in\Rnu^n$, and $\nu>0$, 
a.s.,
\aa{
|\pl_k Y^{\tau,x,\nu}_\tau - \pl_k Y^{\tau,x,0}_\tau|^2  \lt 
 \beta_4 \, \E_\tau \big\{  \int_{T-\beta_{K_N}}^T \hsp \big(|\xi^X_\nu(s)|^2 + (|\xi^Y_\nu(s)|^2\big) ds + |\sig^Y_{T,\nu}|^2\big\}.
}
By what was proved, we can choose continuous versions of the maps $[T-\beta_{K_N},T]\x \Rnu^n
\to \C([T-\beta_{K_N},T])$, $(\tau,x)\mto \pl_k X^{\tau,x}$,  $(\tau,x)\mto 
\pl_k Y^{\tau,x}$, $(\tau,x)\mto X^{\tau,x}$, 
$(\tau,x)\mto Y^{\tau,x}$, and of the map $(\tau,x)\mto Y^{\tau,x}_\tau$.
Therefore,
the above  estimate holds on a set of full $\PP$-measure that does
not depend on $\tau$ and $x$.
Hence,  
\mm{
\E \sup_{x\in\Rnu^n, \tau\in [T-\beta_{K_N},T]} |\pl_x \bar y_{\nu}(\tau,x) - \pl_x \bar y_0(\tau,x)|^2 \\ \lt 
\beta_4 \, \E \big\{ \sup_{\tau,x}\E_\tau \int_{T-\beta_{K_N}}^T \hsp \big(|\xi^X_\nu(s)|^2 + (|\xi^Y_\nu(s)|^2\big) ds + |\sig^Y_{T,\nu}|^2\big\}
\to 0 \quad \text{as} \; \nu\to 0
}
by \rf{dif-bounds}, \rf{x0},  and \rf{y0}.
Further, by  \rf{y-bound} and \rf{yd11},
the bounds for $\bar y_\nu(t,x)$ and  $\pl_x \bar y_\nu(t,x)$ 
do not depend on $\nu\in (0,\nu_0]$. 
Therefore, as $\nu\to 0$,  
\mmm{
\lb{non-lin}
 \E \sup_{x\in\Rnu^n, \tau\in [T-\beta_{K_N},T]} |(\bar y_{\nu},\pl_x)\bar y_{\nu}(t,x) - (\bar y_0,\pl_x)\bar y_0(t,x)|^2 \\
\lt  \E \sup_{x\in\Rnu^n, \tau\in [T-\beta_{K_N},T]} \big( |(\bar y_\nu-\bar y_0),\pl_x)\bar y_\nu (t,x)|^2 
+ |(\bar y_0,\pl_x)(\bar y_\nu-\bar y_0)(t,x)|^2 \big)
\to 0.
}
Finally, by \rf{2-bound}, $\lap \bar y_\nu(t,x)$ is bounded uniformly in $\nu \in (0,\nu_0]$ and $(t,x)\in [T-  \beta_{K_N}, T]\x \Rnu^n$. 
This implies that as $\nu\to 0$,
 \aaa{
 \lb{nu-lap}
 \nu\, \E  \sup_{x\in\Rnu^n, \tau\in [T-\beta_{K_N},T]} |\lap \bar y_\nu(t,x)|^2 \to 0.
 }
 Now equation \rf{backward-pde}, together with Lemma \ref{lem999}, \rf{non-lin}, and \rf{nu-lap} imply that, a.s.,
  for all $(t,x)\in [T-  \beta_{K_N}, T] \x \Rnu^n$,
\aaa{
\lb{y90}
\bar y_0(t,x)= h(x) + \int_t^T\big[(\bar y_0,\nab)\bar y_0(s,x) + \bar F_N(s,x,\bar y_0(t,x))\big]\, ds.
}
Further, by Lemma \ref{lem999}, for the $\nu$-continuous version of the process
$\bar y_{\fdot}: [0,\nu_0]\x \Om \to \C([T- \beta_{K_N},T]\x \Rnu^n), \;
(\nu,\om)\mto \bar y_\nu$,
it holds that, a.s.,
\aa{
\sup_{x\in\Rnu^n, \tau\in [T- \beta_{K_N},T]}  |\bar y_\nu(\tau,x)-\bar y_{0}(\tau,x)| \to 0 \quad \text{as} \; \nu\to 0.
}
The lemma is proved. 
\end{proof}

The following theorem is the main result of this section. 
\begin{thm}
Assume (A1)--(A3).
Further, we assume that $\|h\|_{\C^2_b}$ is bounded in $\om\in\Om_0$.
Then, there exists a stopping time $S$, positive a.s.,  such that
 on $[0,S]$
 there exists a $\C^{0,1}_b$-solution $y_0(t,x)$ to the inviscid stochastic Burgers equation 
\aaa{
\label{inviscid}
y(t,x) =  h(x) + \int_0^t  \big[f(s,x,y) - (y,\nabla)y(s,x)\big] ds +  \eta(t,x).
 }
 This solution is $\mc F_t$-adapted for each $x\in\Rnu^n$.
Moreover, if $\td y_0(t,x)$ is another $\C^{0,1}_b$-solution to \rf{inviscid} on $[0, \td S]$, 
where $\td S$ is a positive stopping time,
then, a.s., $\td y_0(t,x) = y_0(t,x)$ on $[0, S\we \td S]$. 
Furthermore, if $y_\nu(t,x)$ is the $\C^{0,2}_b$-solution to \rf{viscous}
(whose existence has been established by Theorem \ref{main}), then there exists a
$\nu$-continuous version of 
$y_{\cdot}\!: [0,\nu_0]\x \Om \to \C_b([0, S]\x \Rnu^n)$,
$(\nu,\om)\mto y_\nu$. In particular, it holds that
 $\lim_{\nu\to 0}y_\nu(t,x) = y_0(t,x)$  a.s., where the limit is uniform in $(x,t)\in\Rnu^n\x [0, S]$.
\end{thm}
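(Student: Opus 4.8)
The plan is to remove the boundedness of $\|\eta\|_{\C^{0,4}_b}$ by localizing in $\om$ exactly as in the proof of Theorem \ref{main}, and then to patch together the local inviscid solutions supplied by Theorem \ref{vanvisc}. First I would replace $\eta$ and $h$ by the truncations $\eta_N$ and $h_N$ of \rf{etaN}, so that $\|\eta_N\|_{\C^{0,4}_b}\lt N$ and $\|h_N\|_{\C^2_b}\lt N$ uniformly in $\om\in\Om_0$; for each $N$ the hypotheses of Theorem \ref{vanvisc} then hold. Writing $K_N,C_N$ for the constants \rf{K} associated with $(\eta_N,F_N)$, Theorem \ref{vanvisc} yields a unique $\mc F^B_{T-t}$-adapted $\C^{1,1}_b$ solution $\bar y_{N,0}$ of the backward inviscid equation on $[T-\beta_{K_N,C_N},T]$, together with the a.s.\ uniform convergence $\bar y_{N,\nu}\to\bar y_{N,0}$ as $\nu\to 0$. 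Undoing the time reversal and the substitution \rf{substitution}, the field $y_{N,0}(t,x)=\bar y_{N,0}(T-t,x)+\eta_N(t,x)$ is an $\mc F^B_t$-adapted solution of the inviscid equation driven by $\eta_N$ on the forward interval $[0,\beta_{K_N,C_N}]$. Since $\eta_N(t,x)=\eta(t,x)$ for $t\lt T_N$, this $y_{N,0}$ solves the genuine inviscid equation \rf{inviscid} on $[0,T_N\we\beta_{K_N,C_N}]$, and by the uniqueness in Theorem \ref{lem11} the $\eta_N$-viscous solution $y_{N,\nu}$ agrees there with the true viscous solution $y_\nu$ of Theorem \ref{main}.

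Next I would set $S=\sup_{N}\big(T_N\we\beta_{K_N,C_N}\big)$. This is a stopping time, being a countable supremum of the stopping times $T_N\we\beta_{K_N,C_N}$ (the $\beta_{K_N,C_N}$ are deterministic), and it is positive a.s.\ since on $\Om_0$ each $T_N$ is strictly positive and each $\beta_{K_N,C_N}>0$. I would then define $y_0$ on $[0,S]$ by patching: for $t<S$ choose any $N$ with $t<T_N\we\beta_{K_N,C_N}$ and put $y_0(t,x)=y_{N,0}(t,x)$. This is unambiguous because on the overlap of two truncation levels $\eta_N=\eta_M=\eta$, so both fields are represented through \rf{substitution} and the forward--backward system \rf{bsde-inviscid} by the same unique flow, exactly as in the uniqueness argument at the end of the proof of Theorem \ref{vanvisc}. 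The patched $y_0$ is $\mc F^B_t$-adapted because each $y_{N,0}$ is, and since $\bar y_{N,0}\in\C^{1,1}_b$ while $\eta\in\C^{\frac{\beta}2,4}_b$, it belongs to $\C^{\frac{\beta}2,1}_b$.

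For uniqueness, let $\td y_0$ solve \rf{inviscid} on $[0,\td S]$. After the substitution \rf{substitution} and time reversal, on $[0,T_N\we\td S]$ it becomes a backward solution of the $\eta_N$-equation, and reconstructing $\td X^{\tau,x,0}$ from it via \rf{eqtdx} shows that $\big(\td X^{\tau,x,0},\td y_0(\cdot,\td X^{\tau,x,0})\big)$ solves \rf{bsde-inviscid}; the uniqueness asserted in Theorem \ref{vanvisc} then forces $\td y_0=y_{N,0}$ on $[0,T_N\we\beta_{K_N,C_N}\we\td S]$. Taking the supremum over $N$ gives $\td y_0=y_0$ on $[0,S\we\td S]$ a.s.

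For the inviscid limit I would invoke Lemma \ref{lem999} at each level $N$ to obtain a $\nu$-continuous version of $\nu\mto\bar y_{N,\nu}$ on $[T-\gm^{(1)}_{K_N,C_N},T]\supset[T-\beta_{K_N,C_N},T]$ whose value at $\nu=0$ is $\bar y_{N,0}$, together with the a.s.\ uniform convergence of Theorem \ref{vanvisc}. Transporting these through \rf{substitution}, using $y_{N,\nu}=y_\nu$ on $[0,T_N]$, and patching over $N$ produces the asserted $\nu$-continuous $\C_b([0,S]\x\Rnu^n)$-valued version of $y_\cdot$ and the a.s.\ uniform convergence $y_\nu\to y_0$ on $[0,S]\x\Rnu^n$. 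The main obstacle is not any individual estimate --- these are all packaged in Theorem \ref{vanvisc} and Lemma \ref{lem999} --- but the bookkeeping of the random horizon: verifying that $S$ is a genuine stopping time and is a.s.\ positive, and, crucially, merging the countably many a.s.\ statements (each valid on its own full-measure set for a fixed $N$) into a single full-measure event on which existence, patching consistency, adaptedness, and the uniform convergence hold simultaneously on $[0,S]$. A secondary subtlety is localizing an arbitrary competitor $\td y_0$, defined only up to the random time $\td S$, so that the uniqueness of Theorem \ref{vanvisc}, phrased for a full deterministic subinterval, may legitimately be applied.
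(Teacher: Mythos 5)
Your overall strategy --- truncating via the stopping times $T_N$ of \rf{TN} and the fields $\eta_N,h_N$ of \rf{etaN}, invoking Theorem \ref{vanvisc} and Lemma \ref{lem999} at level $N$, and undoing the time reversal and the substitution \rf{substitution} --- is exactly the paper's. The paper, however, \emph{fixes a single} $N$ and sets $S=T_N\we\beta_{K_N,C_N}$, which is already a.s.\ positive because $T_N>0$ on $\Om_0$; every assertion of the theorem is then inherited verbatim from the level-$N$ statements. You instead set $S=\sup_N\big(T_N\we\beta_{K_N,C_N}\big)$ and patch, and this is where a genuine gap appears: the ``furthermore'' clause asserts a $\nu$-continuous $\C_b([0,S]\x\Rnu^n)$-valued version and a.s.\ convergence $y_\nu\to y_0$ \emph{uniformly on} $[0,S]\x\Rnu^n$. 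Uniform convergence on each member of a countable family of intervals does not yield uniform convergence on their union, and the constants in the level-$N$ estimates that drive Lemma \ref{lem999} and Theorem \ref{vanvisc} (e.g.\ $\beta_2$ in \rf{yy}, $\beta_3$ in \rf{estimate6}, and the bounds $M,M_1$ of Corollary \ref{cor4}) all depend on $N$ through $K_N$, $C_N$, and $\|\eta_N\|_{\C^{0,4}_b}\lt N$. For a fixed $\nu$ the discrepancy $\sup_{[0,S]\x\Rnu^n}|y_\nu-y_0|$ can pick up contributions near $S$ that are controlled only by arbitrarily large $N$, so your patching does not deliver the claimed uniform limit (nor the $\nu$-continuity of the $\C_b([0,S]\x\Rnu^n)$-valued process) without a new, uniform-in-$N$ argument that you do not supply. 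There are also secondary loose ends in your version: whether the supremum is attained (so that the solution is defined on the closed interval $[0,S]$ rather than $[0,S)$), and the measurability bookkeeping when the optimal level $N^*(\om)$ is random.

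The remedy is simply to not take the supremum: since the theorem only asks for \emph{some} a.s.\ positive stopping time, fix one $N$, put $S=T_N\we\beta_{K_N,C_N}$, observe that $\eta_N=\eta$ on $[0,S]$ so that the level-$N$ viscous and inviscid solutions coincide there with solutions of \rf{viscous} and \rf{inviscid} by the uniqueness of Theorem \ref{lem11}, and read off existence, adaptedness, regularity, the $\nu$-continuous version, and the uniform convergence directly from Theorem \ref{vanvisc} and Lemma \ref{lem999}. Your uniqueness argument (representing a competitor $\td y_0$ along the characteristics \rf{eqtdx} and appealing to the uniqueness of \rf{bsde-inviscid}) is sound and matches the paper's, once it is likewise localized to the single interval $[0,T_N\we\beta_{K_N,C_N}\we\td S]$.
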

\begin{proof}
Let $\bar y_0^N$ be defined by \rf{y00} and associated to a positive integer $N$.  
As it was shown in the proof of Lemma \ref{vanvisc}, $\bar y_0^N$ is a  $C^{1,1}_b$-solution to 
\rf{y90} on $[T-\beta_{K_N},T]$.
Therefore,
$y^N_0(t,x) = \bar y^N_0(T-t,x) + \eta_N(t,x)$  is a $C^{0,1}_b$-solution to 
\aaa{
\label{inviscid1}
y(t,x) =  h(x) + \int_0^t  \big[f(s,x,y) - (y,\nabla)y(s,x)\big] ds +  \eta_N(t,x)
 }
 on $[0,\beta_{K_N}]$. 
Define $S= \beta_{K_N}\we T_N$, where $T_N$ is given by \rf{TN}. By Lemma \ref{lem-unique},
 $y^N_\nu(t,x) = y_\nu(t,x)$ on $[0,S]$  for all $\nu\in (0,\nu_0]$, where $y_\nu(t,x)$ is the unique  $\C^{0,2}_b$-solution to \rf{viscous}.
 Since, by Lemma \ref{vanvisc}, $\lim_{\nu\to 0} y^N_\nu(t,x) = y^N_0(t,x)$, a.s., in the space $\C_b([0, \beta_{K_N}]\x \Rnu^n)$, then
 $y^N_0(t,x) = y_0(t,x)$ on $[0,S]$.
 Thus, we skip the index $N$ when we consider this solution in $[0,S]$.
 Clearly, on $[0,S]$, $y_0(t,x)$ verifies \rf{inviscid} a.s.
 
 Assume, equation \rf{inviscid} has another $\C^{0,1}_b$-solution $\td y_0(t,x)$ which verifies this equation on a random time interval $[0,\td S]$, where
 the stopping time $\td S$ is positive a.s. On $[T-\td S,T]$, we define $\check y_0(t,x) = \td y_0(T-t,x) -\eta(T-t,x)$, 
 and consider equation \rf{eqtdx} below pathwise for each $\tau\in [T-\td S, T]$:
\aaa{
\lb{eqtdx}
\td X^{\tau,x,0}_t = x -\int_{\tau}^t\big(\bar \eta(s,\td X^{\tau,x,0}_s) + \check y_0(s, \td X^{\tau,x,0}_s) \big)ds.
}
Let $\td X^{\tau,x,0}_t$ be the solution to \rf{eqtdx}.
Then, it is straightforward to verify that $\big(\td X^{\tau,x,0}_t, \check y_0(t,\td X^{\tau,x,0}_t)\big)$ is a solution to \rf{bsde-inviscid}.
Indeed, it suffices to note that $\pl_t \check y_0(t,\td X^{\tau,x,0}_t) = (\pl_t \td X^{\tau,x,0}_t,\pl_x)\check y_0(t,\td X^{\tau,x,0}_t)$  
and compute $\pl_t \td X^{\tau,x,0}_t$ via \rf{eqtdx}. By the uniqueness of solution to \rf{bsde-inviscid} on $[T- S\we \td S,T]$,
we conclude that $y_0(t,x) = \td y_0(t,x)$ on $[0, S\we\td S] \x\Rnu^n$ a.s.
The theorem is proved.
\end{proof}

\bibliographystyle{siamplain}

\begin{thebibliography}{99}
%
\bibitem{assing} \textsc{S. Assing},   \textit{A pregenerator for Burgers equation forced by conservative
noise}, Commun. Math. Phys. 225,(2002) pp. 611--632.
%
%
%
\bibitem{bec} \textsc{J. Bec and K. Khanin},  \textit{Burgers turbulence}, Phys. Rep, 447, (2007), pp. 1--66.
%
\bibitem{bec1} \textsc{J. Bec},  \textit{Universality of velocity gradients in forced Burgers turbulence},
Phys. Rev. Lett. 87, (2001) 104501 
%
\bibitem{bec2} \textsc{J. Bec, U. Frisch, K. Khanin},   \textit{Kicked Burgers turbulence}, J. Fluid. Mech. 416, (2000), pp. 239--267.
%
\bibitem{blatter} \textsc{G. Blatter, M.V. Feigelman, V.B. Geshkenbein, A.I. Larkin, and V. M. Vinokur}, 
\textit{Vortices in high-temperature superconductors}, Rev. Mod. Phys., vol. 66, no. 4 (1994) pp. 1125--1388.
%
\bibitem{bertini} \textsc{L. Bertini, N. Cancrini, G. Jona-Lasinio},  \textit{The stochastic Burgers equation}, Comm. Math. Phys. 165 
(1994)  pp. 211--232.
%
%
\bibitem{14} \textsc{S. A. Boldyrev}, \textit{Turbulence without pressure in $d$ dimensions}. Phys. Rev. E 59, 2971, 1999.
%
\bibitem{boritchev} \textsc{A. Boritchev},  \textit{Multidimensional Potential Burgers Turbulence},
Communications in Mathematical Physics, vol. 342, issue 2, (2016) pp. 441--489.
%
%
\bibitem{brzezniak} \textsc{Z. Brzezniak, B. Goldys, M. Neklyudov},  \textit{Multidimensional stochastic Burgers equation}, 
SIAM journal of Mathematical Analysis, vol. 46, no. 1, (2014), pp. 871--889.
%
\bibitem{chen} \textsc{Y. Chen, E. Fan, M. Yuen}, \textit{
The Hopf--Cole transformation, topological solitons and multiple fusion
solutions for the $n$-dimensional Burgers system}, Physics Letters A
vol. 380, issues 1--2, (2016), pp. 9--14.
%
\bibitem{Christian} \textsc{P.Catuogno, J.F.Colombeau, C.Olivera}, \textit{
Generalized solutions of the multidimensional stochastic Burgers equation}, J Math Anal Appl.,
vol. 464, Issue 2, (2018), pp. 1375--1382.
\bibitem{15} \textsc{J. Davoudi, A. A. Masoudi, M. R. R. Tabar, A. R. Rastegar, and F. Shahbazi},
\textit{Three-dimensional forced Burgers turbulence supplemented with a continuity equation}.
Phys. Rev. E 63, (2001) 056308.
%

%
\bibitem{daprato} \textsc{G. Da Prato., A. Debussche and R. Temam},  \textit{Stochastic Burgers equation},
 Nonlinear Differential Equations Appl. NoDEA,  vol  1, issue 4, (1994) pp. 389--402.
 %
 \bibitem{daprato2} \textsc{G. Da Prato, J. Zabczyk},  \textit{Ergodicity for Infinite Dimensional Systems}, Cambridge Univ. Press, Cambridge, 1996.
%
\bibitem{delarue} \textsc{F. Delarue},  \textit{On the existence and uniqueness of solutions to
the FBSDEs in a non-generate case}, Stoch. Proc. and their Appl.
99, (2002) pp. 209--286.
\bibitem{delarue2}  \textsc{F. Delarue},  \textit{Estimates of the Solutions of a System of Quasi-linear PDEs. A Probabilistic Scheme},
S\'e minaire de Probabilit\'e s XXXVII, (2003) pp. 290--332.
\bibitem{delarue3}  \textsc{F. Delarue}, \textit{A forward-backward stochastic algorithm for quasi-linear PDEs},
Ann. Appl. Probab., vol. 16, No 1 (2006) pp. 140--184.

%
\bibitem{ebin}
\textsc{D. Ebin, J. Marsden},  \textit{Groups of diffeomorphisms and the motion of an incompressible fluid},
Ann. of Math. 92 (1) (1970), pp. 102--163.
%
\bibitem{frish}
\textsc{U. Frisch, J. Bec}, Burgulence.  \textit{New trends in turbulence Turbulence: nouveaux aspects},
vol. 74 of the series Les Houches - Ecole d'Ete de Physique Theorique, 
Springer Berlin Heidelberg, (2002), pp. 341--383.
%
\bibitem{gyongy} \textsc{I. Gy\"ongy and D. Nualart},  \textit{On the stochastic Burgers equation in the real line},  Ann. Probab.,
27, (1999), pp. 782--802.
%
\bibitem{golovkin} \textsc{K. K. Golovkin},  \textit{Vanishing viscosity in Cauchy's problem for hydromechanics equations,
Boundary value problems of mathematical physics}, Part 4, Trudy Mat. Inst. Steklov., 92, 1966, 31--49; 
Proc. Steklov Inst. Math., 92, 33--53, 1968
%
\bibitem{gotoh}
\textsc{T. Gotoh,  R. H. Kraichnan}.  \textit{Burgers turbulence with large scale forcing}, Phys. Fluids A 10, (1998), pp. 2859--2866.
%
%
%
\bibitem{gurbatov1}
\textsc{S. Gurbatov, A. Moshkov, A. Noullez}.  \textit{Evolution of anisotropic structures and turbulence in the multidimensional Burgers equation},
 Phys. Rev. E  81(4), (2010) 046312. 
%
\bibitem{iturriaga1} \textsc{R. Iturriaga, K. Khanin}.  \textit{Burgers turbulence and Dynamical Systems},
In: Casacuberta C., Mir\'o-Roig R.M., Verdera J., Xamb\'o-Descamps S. (eds) 
European Congress of Mathematics. Progress in Mathematics, vol 201, Birkh\"auser, Basel.
%
\bibitem{iturriaga} \textsc{R. Iturriaga, K. Khanin}.  \textit{Burgers turbulence and random Lagrangian
systems}, Comm. Math. Phys., vol. 232, issue 3, (2003) pp. 377--428.
%
\bibitem{kunita}
\textsc{H. Kunita},  \textit{Stochastic flows and stochastic differential equations}, Cambridge university press, 1990
%

\bibitem{lady2} \textsc{O. A. Ladyzhenskaya},  \textit{On the solvability in the small of initial-boundary value problems for non-compressible 
ideal and viscous liquids and on vanishing viscosity},
 Boundary-value problems of mathematical physics and related problems of function theory. Part 5, 
 Zap. Nauchn. Sem. LOMI, 21, ``Nauka'', Leningrad. Otdel., Leningrad, (1971) pp. 65--78.
%
\bibitem{lady} \textsc{O.A. Lady\v{z}enskaya;  V.A. Solonnikov; N.N. Ural'ceva},  \textit{Linear and quasi-linear equations of parabolic type}, 
Translations of Mathematical Monographs 23, Providence, RI: American Mathematical Society, pp. XI+648, 1968.
%
\bibitem{landau} \textsc{L.D. Landau, E.M. Lifshitz}, \textit{Fluid Mechanics}, Pergamon press, 1987.
%
%
\bibitem{peng92} \textsc{E. Pardoux and S. Peng},  \textit{Backward stochastic differential equations and 
quasilinear parabolic partial differential equations}, In: Rozovskii B.L., Sowers R.B. (eds) 
Stochastic Partial Differential Equations and Their Applications. Lecture Notes in 
Control and Information Sciences, vol 176. Springer, Berlin, Heidelberg, (1992) pp. 200--217.
%
\bibitem{12}  \textsc{A. M. Polyakov}, \textit{Turbulence without pressure}. Phys. Rev. E 52, (1995) 6183.
%
%
\bibitem{ton} \textsc{B. A. Ton}, \textit{Non-stationary Burgers flows with vanishing viscosity in bounded domains of $\Rnu^3$},
Math. Z., 145 (1975) pp. 69--79.
%
\bibitem{zeldovich} 
\textsc{Ya. B.  Zel'dovich},  \textit{Gravitational instability: An approximate theory for large density perturbations}, Astro. Astrophys. 5, (1970) pp. 84--89.

\end{thebibliography}

\end{document}